\newcommand{\KK}{\mathbb{K}}
\newcommand{\CC}{\mathbb{C}}
\newcommand{\PP}{\mathbb{P}}
\newcommand{\mC}{\mathcal{C}}
\newcommand{\mI}{\mathcal{I}}
\newcommand{\mY}{\mathcal{Y}}
\newcommand{\mL}{\mathcal{L}}
\newcommand{\mM}{\mathcal{M}}
\newcommand{\mP}{\mathcal{P}}
\def\frk{\frak}               
\def\Phi{{\frk n}}
\def\Phi{{\frk N}}
\def\MC{{\mathcal C}}
\def\MX{{\mathcal X}}
\def\MY{{\mathcal Y}}
\newcommand{\ID}{I_{ \Delta}}
\newcommand{\plex}{\prec_{\rm lex}}
\newcommand{\conv}{\textrm{Conv}}
\renewcommand{\hom}{\textrm{Hom}}
\theoremstyle{definition}
\newtheorem{definition}{Definition}[section]
\newtheorem{remark}[definition]{Remark}
\theoremstyle{plain}
\newtheorem{theorem}{Theorem}
\newtheorem*{Theorem*}{Theorem}
\newtheorem*{Corollary*}{Corollary}
\newtheorem{lemma}[definition]{Lemma}
\newtheorem{prop}[definition]{Proposition}
\newtheorem{corollary}[theorem]{Corollary}
\newtheorem{example}{Example}[section]
\newtheorem{conjecture}[definition]{Conjecture}
\newtheorem*{question}{Question}
\theoremstyle{remark}
\title{Conditional probabilities via line arrangements\\ and point configurations}
\author{
 Oliver Clarke, Fatemeh Mohammadi and Harshit J Motwani}
\newcommand\Perp{\protect\mathpalette{\protect\independenT}{\perp}}
\def\independenT#1#2{\mathrel{\rlap{$#1#2$}\mkern2mu{#1#2}}}
\newcommand{\ind}[3][]{\left.#2 \Perp\!_{#1}\, #3 \inD}
\newcommand{\inD}[1][\relax]{\def\argone{#1}\def\temprelax{\relax}
  \ifx\argone\temprelax\right.\else\,\middle|#1\right.{}\fi}
\begin{document}

\maketitle

\noindent{\bf Abstract.} We study the connection between probability distributions satisfying certain conditional independence (CI) constraints, and point and line arrangements in incidence geometry. To a family of CI statements, we associate a polynomial ideal whose algebraic invariants are encoded in a hypergraph. The primary decompositions of these ideals give a characterisation of the distributions satisfying the original CI statements. Classically, these ideals are generated by 2-minors of a matrix of variables, however, in the presence of hidden variables, they contain higher degree minors. This leads to the study of the structure of determinantal hypergraph ideals whose decompositions can be understood in terms of point and line configurations in the projective space. 

\setcounter{tocdepth}{1}
\setlength\cftbeforesecskip{0.2pt}
{\hypersetup{linkcolor=black}\tableofcontents}

\section{Introduction}

This paper develops new links between conditional independence (CI) statements with hidden variables in statistics \cite{Studeny05:Probabilistic_CI_structures,DrtonSturmfelsSullivant09:Algebraic_Statistics,Sullivant}, incidence geometry \cite{richter2011perspectives,lee2013mnev} and the theory of Gr\"obner basis \cite{sturmfels1990grobner,Sturmfels02:Solving_polynomial_equations,herzog2011monomial}. In particular, we study a collection of polynomial ideals called \textit{hypergraph ideals}, 
that naturally arise from a collection of CI statements, and show that their irreducible components can be seen as spaces of point and line arrangements.  

\medskip

\noindent\textbf{Hypergraphs.} A hypergraph $H$ is a collection of subsets of $[n]:=\{1,\ldots,n\}$. We are mainly interested in algebraic properties of hypergraph ideals that encapsulate many families of ideals widely studied in the literature \cite{HSS,herzog2010binomial,ene2013determinantal,sidman2019geometric}. Let $X = (x_{i,j})$ be a $d \times n$ matrix of variables and $R = \KK[X]$ be the polynomial ring over an algebraically closed field $\KK$. We are mainly interested in the case when $\mathbb{K}= \mathbb{C}$. For subsets $A \subseteq [d]$ and $B \subseteq [n]$ of the same size, we write $[A|B]$ for the determinant of the submatrix of $X$ with rows indexed by $A$ and columns indexed by $B$. If $A = [d]$ then we omit it from the notation and simply write $[B]$ for the maximal minor on columns indexed by $B$. We say that an ideal of $R$ is determinantal if it is generated by determinants.
The \emph{hypergraph ideal} of $H$ is 
\[
I_H = \langle [A|B] : A \subseteq [d], \ B \in H, \ |A| = |B| \rangle \subseteq R.
\]
Hypergraph ideals arise in the study of conditional independence statements with hidden variables, see \S\ref{sec:application_CI_statements}. In particular, given a hypergraph $H,$ the main questions in this context are:
\begin{itemize}
    \item[(a)]\label{question:a} To determine whether the ideal $I_H$ is prime and if not what is its primary decomposition. 
\item[(b)] To find a Gr\"obner basis for the ideal $I_H$ and its prime components.
\end{itemize} 
Many known examples of determinantal ideals can be described as hypergraph ideals. For example, the classical ideals generated by all $t$-minors of $X$ can be considered as the hypergraph ideals of $H = \binom{[n]}{t}$ whose edges are all $t$-subsets of $[n]$. Every such ideal is prime \cite{hochster1971cohen}, and the collection of $t$-minors of $X$ forms a Gr\"obner basis for $I_H$ with respect to any term order that chooses the leading diagonal term as the initial term for each minor \cite{sturmfels1990grobner}. 
Other related families of ideals are the ideals of adjacent minors \cite{HSS} and generalised binomial ideals \cite{herzog2010binomial,Rauh}, that are associated to hypergraphs containing all intervals in $[n]$ of length $d$ and to graphs, respectively.
These ideals are all radical and their prime decompositions have a combinatorial description where each prime component is a hypergraph ideal admitting a square-free Gr\"obner basis.
The generalised binomial ideals arise in algebraic statistics as ideals of conditional independence statements in which all random variables are observed, see \cite{herzog2010binomial,Fink, Rauh,SwansonTaylor11:Minimial_Primes_of_CI_Ideals,Fatemeh2}. 

Here, we study a family of hypergraph ideals associated to conditional independence statements with {\em hidden variables}.
More precisely, let $n = k \ell$ for some positive integers $k$ and $\ell$. Define $\MY$ to be the following $k \times \ell$ matrix along with its rows $R_i$ and columns $C_j$ for each $i \in [k]$ and $j \in [\ell]$.
\[
\resizebox{\textwidth}{!}{
$\MY = (\MY_{i,j}) = 
\begin{bmatrix}
1      & k+1    & \dots & (\ell - 1)k + 1 \\
2      & k+2    & \dots & (\ell - 1)k + 2 \\
\vdots & \vdots &       & \vdots \\
k      & 2k     & \dots & \ell k \\
\end{bmatrix}, \ 
\begin{tabular}{lr}
    $R_i = \{\MY_{i,1}, \MY_{i,2}, \dots, \MY_{i,\ell}\}$ & its rows, \\
    $C_j = \{\MY_{1,j}, \MY_{2,j}, \dots, \MY_{k,j}\}$ & its columns.
\end{tabular}
$
}
\]
We define the hypergraph $\Delta$ on the ground set $[k\ell]$ as
\begin{eqnarray}\label{eq:Delta}
\Delta = \Delta(k, \ell) = 
\left\{
\binom{R_i}{3}, \binom{C_j}{2}:
i \in [k], j \in [\ell]
\right\}.
\end{eqnarray}
For small cases, see Examples~\ref{example:k2l5s2td3} and \ref{example:k3l4s2td3}.
Let $d\geq 3$ be an integer and $X = (x_{i,j})$ be a $d \times k\ell$ matrix of variables. We study the hypergraph ideal $I_\Delta$ in the polynomial ring $R = \KK[X]$. We show that the radical of $I_\Delta$ can be decomposed into ideals of line arrangements. Moreover, for small values of $k$, we prove that this yields a prime decomposition and in the case $k = 2$ we give an explicit combinatorial description of these prime components. This, in particular, solves a problem posed in \cite[\S4]{clarke2020conditional}. These ideals are the CI ideals associated to a conditional independence model described in Section~\ref{sec:application_CI_statements}. See also Remark~\ref{rem:CI}.

We would like to remark that the explicit computation of the primary decomposition of $I_{\Delta}$ quickly becomes intractable in computer algebra systems. For instance, the largest cases that can be calculated in \texttt{Macaulay2} \cite{M2} are $(k,\ell)=(2,5)$ and $(3,4)$. Examples~\ref{example:k2l5s2td3} and ~\ref{example:k3l4s2td3} show that:
\begin{itemize}
    \item For $(k,\ell)=(2,5)$ the ideal $I_\Delta$ has 171 prime components divided into 6 isomorphism classes.  
    \item For $(k,\ell)=(3,4)$ the ideal $I_\Delta$ has 319 prime components divided into 9 isomorphism classes.
\end{itemize}
Moreover, we show that for both examples above, 
the isomorphism classes of prime components correspond to certain point and line arrangements derived from $\Delta$.

\medskip

\noindent\textbf{Point and line arrangements.}\label{page:point} We will be mainly following the notation of \cite{richter2011perspectives}. An element of the affine variety of $I_\Delta$ is a $d \times n$ matrix that can be viewed as a collection of $n$ points in $\CC^d$. The generators of $I_\Delta$ guarantee that these points satisfy certain linearity conditions. For example, if three non-zero vectors in $\CC^d$ lie in a $2$-dimensional subspace, then we say that their corresponding points lie on a common line in the projective space $\CC\PP^{d-1}$. This happens if and only if all $3$-minors of the corresponding $d\times 3$ submatrix vanish.
Since $I_\Delta$ is generated by $2$ and $3$-minors, it is natural to use line arrangements to decompose its associated variety. A line arrangement is an abstract combinatorial tool which allows us to parametrise collections of points in $\CC\PP^{d-1}$ by specifying which points lie on the same projective line. 
A configuration of a line arrangement is a collection of points in $\CC^d$ that minimally satisfy the requirements of the line arrangement. We will investigate the line arrangements that appear in $I_\Delta$ and show that, for certain cases, the set of configurations is an irreducible affine variety. 

More precisely, an \textit{incidence structure} is a triple $(\mP,\mL,\mI)$ where $\mP$ is the set of points, $\mL$ is the set of lines and $\mI \subseteq \mP \times \mL$ is the set of incidences. If $(p, \ell) \in \mI$ then we say $p$ lies on $\ell$ or $\ell$ passes through $p$. A well-studied example of an incidence structure is the projective plane \cite{richter2011perspectives}. A \textit{point and line arrangement on $[n]$}, or simply a line arrangement, is an incidence structure such that: the set of points $\mP$ is a partition of $[n]$, each line contains at least three distinct points and any pair of points lies on at most one common line. By abuse of notation, we call each element of $[n]$ a point of the line arrangement. If $i, j \in [n]$ belong to the same point $P \in \mP$ then we say $i$ and $j$ coincide.
A line arrangement may also have a distinguished set of \emph{zero points}. Formally, $L$ is a line arrangement on $[n]$ with zero points $S \subset [n]$ if $L$ is a point and line configuration on $[n] \backslash S$.
We think of the non-zero points of a line arrangements as points in the projective space satisfying the condition that three points lie on a line in projective space if and only if they lie on a line in the arrangement.

Associated to each line arrangement $L$ is an ideal $I_L$, see Definition~\ref{def:line_arr_ideal}. For any ideal $I \subseteq \KK[x_1, \dots, x_n]$ its vanishing locus is $V(I) = \{x \in \KK^n : f(x) = 0, \text{ for all } x \in I \}$. The vanishing locus of the ideal $I_L$ can be understood as the Zariski closure of the collection of configurations.
A \emph{configuration of $L$} in $\KK^d$ is a collection of vectors $A = (a_1, \dots, a_n) \in (\KK^{d})^n$ such that a subset of vectors $\{a_{i_1}, \dots, a_{i_r}\}$ is linearly independent if and only if $B = \{i_1, \dots, i_r\}$ satisfies the following three properties: $B \cap S = \emptyset$, no pair of points in $B$ coincide and no three points in $B$ lie on the same line.
A configuration $A$ is often written as a matrix $A \in \KK^{d \times n}$ where the points $a_i$ are the columns of $A$.
The collection of all configurations is called the \textit{configuration space of $L$} and is denoted by $Z^o_L \subset \KK^{d \times n}$. Its Zariski closure is denoted $V_L = \overline{Z^o_L}$ and coincides with the variety of the line arrangement $V(I_L)$. The set $Z^o_L$ is a Zariski open subset of the vanishing locus of polynomials associated to the line arrangement, see Definition~\ref{def:line_arr_polys}.

\medskip

In this paper, we examine the algebraic properties of the hypergraph ideals $I_\Delta$  through the lens of incidence geometry. We study the structure of $I_\Delta$ in terms of certain point and line arrangements. The first main result of this paper is the following, see Theorems~\ref{thm:intersection_thm}, \ref{thm:k=2_min_prime_decomp} and Corollary~\ref{cor:irred_4_or_less_lines}.

\begin{theorem}\label{thm:intro_intersection}
Let $k \ge 2, \ell \ge 3$ and $d \ge 3$. The variety $V(I_\Delta)$ can be decomposed into the configuration spaces of a family of line arrangements. Moreover if $k = 2$ then $I_\Delta$ is radical and its minimal prime components are hypergraph ideals which are classified combinatorially.
\end{theorem}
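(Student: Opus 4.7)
The plan is to build a stratification of $V(I_\Delta)$ by \emph{compatible} line arrangements and then identify the minimal primes as closures of strata. For any matrix $X \in V(I_\Delta)$, the generators of $I_\Delta$ force the columns indexed by each $C_j$ to be scalar multiples of a single vector (so they project to a single point, or zero) and the columns indexed by each $R_i$ to lie in a common $2$-dimensional subspace (so they are collinear in projective space). This naturally assigns to $X$ a line arrangement $L(X)$ on $[k\ell]$ recording which of its columns coincide projectively and which triples of non-coincident columns lie on a common projective line.

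For the first assertion, I would enumerate those line arrangements $L$ on $[k\ell]$ that are compatible with $\Delta$ in the sense that each $C_j$ is contained in a single point of $L$ and each $R_i$ is contained in a single line of $L$ (allowing zero points and coincidences). Each such $L$ satisfies $V_L \subseteq V(I_\Delta)$, since the defining relations of $I_L$ (in the sense of Definition~\ref{def:line_arr_ideal}) include all generators of $I_\Delta$. Conversely, stratifying $V(I_\Delta)$ by $X \mapsto L(X)$ expresses it as a finite union of locally closed pieces, and closing each piece gives $V(I_\Delta) = \bigcup_L V_L$ with $L$ ranging over the compatible arrangements. Discarding those $L$ whose $V_L$ is strictly contained in some other $V_{L'}$ yields the claimed decomposition into configuration spaces.

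For the case $k = 2$, the combinatorics simplifies dramatically: each column constraint identifies a single projective point from row $1$ with one from row $2$, so any compatible $L$ is specified by (i) the induced partition of each row into projective points, (ii) which columns are forced to zero, and (iii) whether the two rows span a common projective line, two distinct lines meeting at a point of concurrence, or collapse further. This produces a finite combinatorial list of maximal arrangements. Each resulting ideal $I_L$ is itself a hypergraph ideal $I_{H_L}$ on an enlarged hypergraph $H_L$ that records the additional coincidences and collinearities, and primality is obtained by exhibiting a Gr\"obner basis with square-free initial ideal under a suitable diagonal term order, reducing to the classical primality results of \cite{hochster1971cohen,sturmfels1990grobner}.

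The remaining step, and the main obstacle, is to upgrade the set-theoretic decomposition to the ideal identity $I_\Delta = \bigcap_L I_L$. To that end I would compute $\init(I_\Delta)$ under the same term order, verify that the initial ideal is square-free, and conclude that $I_\Delta$ is radical; the set-theoretic decomposition then promotes to a prime decomposition. The subtle point is controlling how coincidences propagate between columns: an induction on $\ell$ should rule out spurious components at boundary strata where several coincidences occur simultaneously, while also certifying that the square-free initial ideal contains no unexpected generators.
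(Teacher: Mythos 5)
Your first assertion is argued exactly as in the paper: assign to each $A \in V(I_\Delta)$ the arrangement $\mM_A$ recording which columns coincide projectively and which lie on a common projective line, observe that the generators of $I_\Delta$ force $\mM_A$ to be compatible with $\Delta$, and combine this with the containment $I_\Delta \subseteq N_L$ for every compatible $L$. That part is sound and is the content of Theorem~\ref{thm:intersection_thm}.

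The $k=2$ part has a genuine gap at the step where you claim primality of the components. A Gr\"obner basis with square-free initial terms shows an ideal is \emph{radical}, not prime, and the classical results you invoke concern the ideal of \emph{all} $t$-minors of a generic matrix; they do not cover the mixed hypergraph ideals $I_S$ arising here, which contain $2$-minors on some columns, $3$-minors on two overlapping column sets, and $4$-minors on the complement. You also assert without proof that each $I_L$ (defined via a saturation) is a hypergraph ideal; this is false in general --- for three concurrent lines the saturation produces a non-determinantal generator (Example~\ref{example:three_lines}) --- and even for two lines it requires an argument. The paper supplies three ingredients your outline is missing: (i) radicality of each candidate $I_S$ via a case-by-case Buchberger analysis (Lemma~\ref{lem:k=2_grobner}, partly computer-assisted); (ii) the identification $\sqrt{I_S} = I_{L(S)}$ via a perturbation argument in the Euclidean topology (Lemmas~\ref{prop:topology} and~\ref{lem:perturb_in_p_line}, Theorem~\ref{thm:radical_I_S_=_I_L}), which is precisely what controls the saturation; and (iii) primality of $I_{L(S)}$ via a geometric parametrization exhibiting $Z^o_{L(S)}$ as an open subset of the image of an irreducible variety (Theorem~\ref{thm:irred_line_arr_build_up}). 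Finally, your fallback of computing $\init(I_\Delta)$ directly and checking square-freeness is not a viable substitute: finding a Gr\"obner basis of $I_\Delta$ itself is exactly the intractable problem the paper circumvents by decomposing first and only computing Gr\"obner bases of the individual components.
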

We prove the second part of this theorem in a number of distinct steps. We use Gr\"obner bases to show that the hypergraph ideals appearing in the decomposition of $I_\Delta$ are radical. Perturbation arguments show that these ideals are also prime. In particular, we use the fact that the Zariski topology is coarser than the Euclidean topology in $\CC^n$, as proved in Lemma~\ref{prop:topology}.

In cases when it is not possible to find a complete combinatorial description of the prime components of $\sqrt{I_\Delta}$, we give a procedure for finding line arrangements whose configuration space is irreducible.
Our second main result is the following theorem which provides an inductive process to prove the irreducibility of the variety of a line arrangement. Here, for any line arrangement $L$ and any line $\ell$ of $L$, the line arrangement obtained from $L$ by removing $\ell$ has all points of $L$ which do not lie solely on $\ell$ and has all lines of $L$ except $\ell$ with the same incidence structure.  

\begin{theorem}\label{thm:irred_line_arr_build_up}
Let $L$ be a line arrangement and $\ell$ be a line of $L$. Let $L'$ be the line arrangement obtained from $L$ by removing $\ell$ and assume that $\ell$ intersects $L'$ in at most two points. If the configuration space of $L'$ is irreducible then so is the configuration space of $L$. 
\end{theorem}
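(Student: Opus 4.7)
The plan is to construct a dominant morphism onto $Z^o_L$ from the product of $Z^o_{L'}$ with an irreducible parameter space. Since $Z^o_L$ is a non-empty Zariski open subset of its closure $V_L$, and $Z^o_{L'}$ is irreducible by hypothesis, this will imply that $V_L$, and hence $Z^o_L$, is irreducible.

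Let $s = |\ell \cap L'| \in \{0, 1, 2\}$ and let $P_1, \dots, P_r$ be the points of $L$ lying solely on $\ell$, with corresponding index sets $I_{P_j} \subseteq [n]$. Any configuration $A \in Z^o_L$ restricts to a configuration of $L'$ by forgetting the columns indexed by $\bigsqcup_j I_{P_j}$, giving a forgetful map $\pi : Z^o_L \to Z^o_{L'}$. I would first describe the fibres of $\pi$ explicitly. In the case $s = 2$ with $\ell \cap L' = \{P, Q\}$, fix representatives $p \in I_P$ and $q \in I_Q$; for any preimage of $\pi$, the projective line $\ell$ is determined as the projectivisation of $\mathrm{span}(a'_p, a'_q)$. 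Each new point $P_j$ must lie on this line, so the fibre is parametrised by choosing, for each $j$, a direction $(\alpha_j, \beta_j) \in \KK^2 \setminus \{0\}$ and scalars $(\lambda_{j,i})_{i \in I_{P_j}} \in \KK^{|I_{P_j}|}$, and setting $a_i = \lambda_{j,i}(\alpha_j a'_p + \beta_j a'_q)$. In the cases $s = 1$ and $s = 0$, one additionally chooses one or two vectors in $\KK^d$ to determine $\ell$ itself, and the same formula applies with the appropriate substitutions.

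Combining these, I would define a morphism
\[
\phi : Z^o_{L'} \times W \longrightarrow V_L,
\]
where $W$ is an affine space encoding all the extension parameters described above and $\phi$ preserves the columns of $A'$ belonging to $L'$. The image lies in $V_L$ by construction, since each generator of $I_L$ encoding a collinearity on $\ell$ vanishes because the new vectors lie manifestly in a $2$-dimensional subspace determined by the configuration on $L'$. The domain $Z^o_{L'} \times W$ is irreducible: $W$ is an affine space and $Z^o_{L'}$ is irreducible by hypothesis. Moreover, any configuration in $Z^o_L$ arises from $\phi$, up to the expected scaling ambiguity in the parameters $(\alpha_j, \beta_j, \lambda_{j,i})$, so $\phi$ is dominant. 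Therefore $V_L = \overline{\mathrm{Im}(\phi)}$ is irreducible, and $Z^o_L$ is irreducible as a non-empty Zariski open subset of $V_L$.

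The main obstacle is checking that a generic element of $Z^o_{L'} \times W$ actually lands in the open locus $Z^o_L \subset V_L$, that is, that no unintended coincidences between new points, nor extra collinearities with existing points, are created. These are finitely many open conditions, and the hypothesis $s \le 2$ is essential: if $\ell$ intersected $L'$ in three or more points, those points would have to be collinear already in every configuration of $L'$, imposing a non-trivial closed condition on $Z^o_{L'}$ and preventing us from freely parametrising the fibre. With $s \le 2$ no such obstruction occurs, so the open subset of $Z^o_{L'} \times W$ on which $\phi$ lands in $Z^o_L$ is non-empty and dense, and the argument concludes.
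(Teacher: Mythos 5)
Your proposal is correct and takes essentially the same approach as the paper: the paper's proof also splits into the cases $s=0,1,2$ and parametrises the new points on $\ell$ by a linear map $\phi:\CC^2\to\CC^3$ constrained to hit the intersection point(s) of $\ell$ with $L'$, together with free coordinates in $(\CC^2)^{|\ell|-s}$ — exactly your $(\alpha_j,\beta_j,\lambda_{j,i})$ data. Both arguments then conclude by observing that $Z^o_L$ is a dense open subset of the image of an irreducible space under a morphism.
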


\noindent\textbf{Outline of paper.} In Section~\ref{sec:main}, we study the hypergraph ideal $I_\Delta$ for all $k \ge 2$ and $\ell \ge 3$. In Section~\ref{sec:comb_line_arr}, we associate a line arrangement $Ar_\Delta(S)$ to each $I_\Delta$ and prove the Decomposition Theorem (Theorem~\ref{thm:intersection_thm}). 
In Section~\ref{sec:small_line_arr}, we categorise line arrangements into types and enumerate those with at most four lines. We then use geometric techniques in Section~\ref{sec:geom_pf_primeness} to prove that the corresponding ideals of these arrangements are all irreducible, see Corollary~\ref{cor:irred_4_or_less_lines}.
In Section~\ref{sec:k=2_case}, we give a complete description of $I_\Delta$ when $k = 2$. In Section~\ref{sec:k=2_I_S_gens}, we define the line arrangement $L(S)$ associated to $I_S$. We show that $\sqrt{I_S}$ coincides with the ideal of the line arrangement $L(S)$ using a perturbation argument. We also show that the ideals $\sqrt{I_S}$ decompose $\sqrt{I_\Delta}$ into prime components. In Section~\ref{sec:k=2_gb}, we show that the canonical generating set for $I_S$ forms a Gr\"obner basis and $I_S$ is radical. In Section~\ref{sec:k=2_min_comp}, we determine the minimal prime components of $\sqrt{I_\Delta}$ and prove Theorem~\ref{thm:k=2_min_prime_decomp}.
In Section~\ref{sec:application_CI_statements}, we consider the applications to algebraic statistics. More precisely, we show that our main result can be seen as a hidden variable analogue to the intersection axiom, see Section~\ref{sec:CI_intersection}.
In Section~\ref{sec:further_qus}, we provide further computations and questions arising from our work. In particular, we consider Gr\"obner bases of the ideals of line arrangements with three lines.

\section{Irreducible decomposition of \texorpdfstring{$V(I_\Delta)$}{VID}}\label{sec:main}
Let $\Delta$ be the hypergraph defined in \eqref{eq:Delta}. In this section, we explore the irreducible decomposition of the associated variety of $I_\Delta$ denoted by $V(I_{\Delta})$. The starting point is to prove the Decomposition Theorem, namely Theorem~\ref{thm:intersection_thm}. 
We show that each component in this decomposition corresponds to a certain line arrangement. Even though the associated variety of a line arrangement is not in general irreducible, we show that this is true for line arrangements with at most four lines.

\subsection{Line arrangements from \texorpdfstring{$\Delta$}{D}}\label{sec:comb_line_arr}
Our main tool for understanding the prime components of the ideal $\sqrt{I_{\Delta}}$ 
are line arrangements associated to $\Delta$, which we collect into families $Ar_\Delta(S)$ for each subset $S \subset [k\ell]$.
Each line arrangement $L \in Ar_\Delta(S)$ gives rise to a radical ideal $I_L$. We show that the intersection of all such ideals $I_L$ is equal to $\sqrt{I_{\Delta}}$. For particular families of line arrangements, we show that $I_L$ is prime. However, arbitrary line arrangements may not correspond to 
prime ideals, see \cite[Example~3.8]{MatroidsCIStatements}. 

\begin{definition}[Compatible line arrangements]\label{def:comb_line_arr}
Recall the  $k \times \ell$ matrix on integers $\mY$ whose rows and columns are $R_i$ and $C_j$ respectively for $i \in [k]$ and $j \in [\ell]$.
Fix a subset $S$ of $ [k\ell]$. A line arrangement $L = (\mP, \mL, \mI)$ on $[k\ell]$ with zero set $S$ is \textit{compatible with $\Delta$} if the points in each column $C_i$ coincide in $L$ and the points in each row lie on a line. Recall our notation from Page~\pageref{page:point} that $\mP$ is a collection of subsets which partitions $[n]$. Compatible line arrangements satisfy the following:
\begin{itemize}
    \item For each column $C_i$ of $\mY$, there exists $P \in \mP$ such that $(C_i\backslash S) \subseteq P$,
    \item For each row $R_i$ of $\mY$, if $R_i$ contains at least three non-zero distinct points then all points in $R_i$ lie on a line in $\mL$.
\end{itemize}
We denote by $Ar_\Delta(S)$ the collection of all compatible line arrangements with zero set $S$. 
\end{definition}
To simplify our notation, we may denote each set $\{i_1,\ldots, i_t\}$
by $i_1\cdots i_t$, where $i_1<\cdots<i_t$. 

\begin{example}{\rm
Suppose that $k = 2$ and $\ell = 4$. The rows and columns of $\mY$ are
$
R_1 = 1357, R_2 = 2468,
C_1 = 12, 
C_2 = 34, 
C_3 = 56$
and $C_4 = 78$. Let $S = 358$ be a subset. We construct four different line arrangements $L_1, L_2, L_3$ and $L_4$ and check whether they are compatible with $\Delta$.
\begin{itemize}
    \item Let $\{12, 4, 6, 7\}$ be the distinct non-zero points of $L_1$ and define a single line containing all the points. In this case $L_1$ is compatible with $\Delta$ since $L_1$ contains a line passing through the three non-zero distinct points $2,4,6$ in $R_2$.

    \item Let $\{124, 67 \}$ be the distinct non-zero points of $L_2$ which contains no lines. Note that $2$ and $4$ coincide and so $R_2$ contains only two distinct points. Therefore $L_2$ is compatible with $\Delta$.

    \item Suppose that $L_3$ is a line arrangement with distinct non-zero points $\{12, 4, 6, 7 \}$. Suppose that there is a line which passes through $2, 4$ and $7$ but not $6$. Then $L_3$ is not compatible with $\Delta$ since $R_2$ contains three distinct points that do not lie on a line.
    
    \item Suppose that $L_4$ is a line arrangement with distinct non-zero points $\{1, 24, 67 \}$. Then $L_4$ is not compatible with $\Delta$ since $1,2$ belong to the same column of $\mY$ but do not coincide in $L_4$.
\end{itemize}
}
\end{example}

For each $ d \ge 3$, we define the variety of a line configuration on $[n]$, which is a subset of $\KK^{d\times n}$.

\begin{definition}[Polynomials of a line arrangement]\label{def:line_arr_polys}
Fix $d$ and let $L = (\mP, \mL, \mI)$ be a line arrangement on $[n]$ with a collection of zero points $S$. Let $X$ be a $d \times n$ matrix of variables. Recall that we work over the polynomial ring $R = \KK[X]$. We define the following collection of polynomials
\begin{align*}
F(L) &= \bigcup_{j \in S} \{x_{i,j} : i \in [d] \} \\
     & \cup \bigcup_{P \in \mP}\left\{ [A | B] : A \subset [d], B \subset P, |A| = |B| = 2 \right\} \\
     & \cup \bigcup_{B}\left\{ [A | B] : A \subset [d], |A| = 3 \right\} \\
     & \cup \bigcup_{\ell_1, \ell_2} \{ [A|B] : A \subseteq [d], B \subseteq \{p \in P \subseteq [n] : (P, \ell_1 ) \in \mI \textrm{ or } (P, \ell_2) \in \mI\}, |A| = |B| = 4 \}
\end{align*}
where the third union is taken over triples $B = \{i,j,k \}$ of distinct non-zero points in $L$ that lie on a common line and the final union is taken over all pairs of lines $\ell_1, \ell_2$ which intersect at a point in $L$. Note that if $d = 3$, then $F(L)$ contains no $4$-minors. We define the ideal $N_L = \langle F(L) \rangle$.
\end{definition}

\begin{definition}\label{def:line_arr_ideal}
Fix $d \ge 3$ and let $L = (\mP, \mL, \mI)$ be a line arrangement on $[n]$ with zero points $S$. 
A basis $B \subseteq [n]$ is a maximal subset of size at most $d$ such that: $B \cap S = \emptyset$, no two points in $B$ coincide and no three points in $B$ lie on the same line.
We call the hypergraph ideal $I_{\{B\}}$ a basis ideal of $L$ and write it as $I_B$.
We define the space of configurations of $L$ to be the Zariski open subset $Z_L^o = V(N_L) \backslash \cup V(I_{B})$  where the union is taken over all bases of $L$.
The variety of the line arrangement is the Zariski closure $V_L = \overline{Z_L^o}$ and its associated ideal is $I_L = I(Z_L^o)$.
\end{definition}

\begin{remark}[Vanishing set and ideal of a line arrangement]\label{rmk:IL_containment}
\begin{itemize}
    \item[(i)] Let $L$ be a line arrangement on $[n]$ with zero points $S$. Let $J = \bigcap I_B$ be the intersection of all basis ideals of $L$. Since the variety of a line arrangement is the set difference between $V(N_L)$ and $V(J)$, the ideal of the line arrangement from 
Definition~\ref{def:line_arr_ideal} is the radical of the saturation $I_L = \sqrt{(N_L : J^\infty)}$.
\item[(ii)] By definition, for each line arrangement $L$ compatible with $\Delta$, the ideal $N_L$ contains $I_\Delta$. Since $I_L$ is obtained from $N_L$ by taking its radical and quotienting, it follows that $I_L \supseteq \sqrt{I_\Delta}$.
\item[(iii)] The polynomials in $F(L)$ do not necessarily generate the ideal $I_L$, see Example~\ref{example:three_lines}. But, for $k = 2$, they not only generate but also form a Gr\"obner basis for $I_L$, see Section~\ref{sec:k=2_case}. 
\end{itemize}
\end{remark}

Each configuration of a line arrangement contains enough data to recover the line arrangement. We make this construction explicit as follows.

\begin{definition}[Line arrangement from a matrix]
Let $A \in \KK^{d \times n}$ be a matrix with columns $\{a_1, \dots, a_n\} \subseteq \KK^d$. We define a line arrangement $\mM_A = (\mP, \mL, \mI)$ from $A$ as follows. Let $S = \{i : a_i = \underline 0\}$ be the collection of zero columns of $A$. We consider the non-zero columns of $A$ to be points in projective space $\PP^{d-1}$. We define $\mP = \{P_i\}_i$ to be exactly these projective points. That is, $a,b \in [n] \backslash S$ coincide in $\mM_A$ if and only if $\underline{a} = \lambda \underline{b}$ for some $\lambda \in \KK\backslash\{0\}$.

We define the lines $\mL = \{L_i\}_i$ to be the maximal collections of three or more points which lie on a line in the projective space with the natural incidences $\mI$. Explicitly, let $i,j,k \in [n]$ be three distinct points. Then $a_i, a_j, a_k$ are linearly dependent if and only if $i,j,k$ lie on a common line in $\mL$ which happens if and only if all $3$-minors of the submatrix $A_{ijk}$ vanish.
We denote \emph{the point and line arrangement of $A$} by $\mM_A$. Note that, by construction, we have $A \in Z^o_{\mM_A}$.
\end{definition}

Let us illustrate this construction with a simple example.

\begin{example}{\rm
Consider the matrix
\[
A = 
\begin{bmatrix}
0 & 1 & 2 & 0 & 1 & 0 \\
0 & 0 & 0 & 1 & 1 & 0 \\
0 & 0 & 0 & 0 & 0 & 1 \\
\end{bmatrix}
\in \CC^{3 \times 6}.
\]
The corresponding point and line arrangement $\mM_A = (\mP, \mL, \mI)$ has zero points $S = \{1 \}$ and four other distinct points $\mP = \{23, 4, 5, 6 \}$. There is a unique line in this arrangement which passes through the points $23, 4$ and $5$. Calculating the ideal of this line arrangement gives
\[
I_{\mM_A} = \langle x_{1,1}, x_{2,1}, x_{3,1}, [12|23], [13|23], [23|23], [245], [345] \rangle.
\]
}
\end{example}

\noindent\textbf{Notation.} Let $A \in \KK^{d \times n}$ be a matrix and $S \subseteq [n]$ be a subset. We denote by $A_S$ the submatrix of $A$ containing the columns indexed by $S$. 

\smallskip

With the language of compatible line arrangements, we now prove that $V(I_\Delta)$ is the union of configuration spaces $Z^o_L$ where $L$ ranges over all line arrangements $Ar_\Delta(S)$ for all $S \subseteq [k\ell]$.

\begin{theorem}[Decomposition Theorem]\label{thm:intersection_thm}
Let $k \ge 2, \ell \ge 3$ and $d \ge 3$. Then,
\[
\sqrt{I_{\Delta}} = \bigcap_{
\substack{
L \in Ar_\Delta(S),\\
S \subset [k\ell]}
} I_L.
\]
\end{theorem}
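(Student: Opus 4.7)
The plan is to establish the two set-theoretic inclusions separately. The containment $\sqrt{I_\Delta} \subseteq \bigcap_{L} I_L$ is immediate from Remark~\ref{rmk:IL_containment}(ii): every line arrangement $L$ compatible with $\Delta$ satisfies $I_L \supseteq \sqrt{I_\Delta}$, and intersecting over all $S \subseteq [k\ell]$ and all $L \in Ar_\Delta(S)$ preserves this inclusion.

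For the reverse direction, I would work at the level of varieties and prove $V(I_\Delta) \subseteq \bigcup_{L} V(I_L)$; the corresponding ideal-theoretic inclusion then follows from the Nullstellensatz over the algebraically closed field $\KK$. Given $A \in V(I_\Delta)$ with zero set $S = \{i : a_i = 0\}$, the natural candidate is the line arrangement $\mM_A$ of $A$, and there are two things to check: (a)~$\mM_A \in Ar_\Delta(S)$, and (b)~$A \in V(I_{\mM_A})$. For (a), the column-coincidence condition of Definition~\ref{def:comb_line_arr} is forced by the $2$-minor generators of $I_\Delta$ indexed by pairs from $C_j$: these vanish on $A$, so any two non-zero columns indexed by $C_j$ are scalar multiples and hence represent the same projective point in $\mM_A$. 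The row-collinearity condition is forced by the $3$-minor generators on triples from each $R_i$: if $R_i$ contains at least three projectively distinct non-zero points, pick any two of them $a, b$ (which are linearly independent since they are distinct projective points); every further non-zero column $c$ in $R_i$ then satisfies the vanishing $3$-minor $[A|\{a,b,c\}] = 0$, forcing $c \in \operatorname{span}(a, b)$, and hence placing all non-zero points of $R_i$ on a single projective line. For (b), the construction of $\mM_A$ is set up precisely so that $A$ is a configuration of $\mM_A$, giving $A \in Z^o_{\mM_A} \subseteq V(I_{\mM_A})$.

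The step I expect to be delicate is claim (b) in the presence of higher-order degeneracies of $A$ — for instance, when four columns of $A$ lie in a common hyperplane while no three of them are collinear. In that case, $A$ might fail to belong to the open set $Z^o_{\mM_A}$ because some basis $B$ of $\mM_A$ has $A_B$ rank-deficient, so that $A \in V(I_B)$; one would instead argue $A \in V(I_{\mM_A}) = \overline{Z^o_{\mM_A}}$ by exhibiting a perturbation inside $V(I_\Delta)$ which destroys the extra linear dependency while preserving the incidence structure of $\mM_A$. Once this closure argument is in place, the proof reduces to the routine minor bookkeeping described above, and the two inclusions combine to give the stated equality.
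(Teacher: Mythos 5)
Your proposal follows the paper's proof essentially step for step: one inclusion from Remark~\ref{rmk:IL_containment}(ii), the other via the Nullstellensatz by showing that any $A \in V(I_\Delta)$ with zero set $S$ has $\mM_A \in Ar_\Delta(S)$ and $A \in V(I_{\mM_A})$, with the same $2$-minor/$3$-minor bookkeeping for columns and rows. The only divergence is that you flag step (b) as delicate, whereas the paper simply asserts $A \in Z^o_{\mM_A}$ ``by construction.'' Your caution is warranted: $\mM_A$ records only zero columns, coincidences and collinear triples, so for $d \ge 4$ a basis $B$ of $\mM_A$ can have size four and $A_B$ can still be rank-deficient (e.g.\ four pairwise independent, non-collinear columns spanning a $3$-dimensional subspace of $\CC^4$), in which case $A \in V(I_B)$ and $A \notin Z^o_{\mM_A}$. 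What the theorem actually requires is only $A \in V(I_{\mM_A}) = \overline{Z^o_{\mM_A}}$, and the perturbation you sketch --- removing the excess linear dependency while preserving the incidence data, in the spirit of Lemma~\ref{lem:perturb_in_p_line} --- is the right way to supply that closure statement; for $d = 3$ no such argument is needed, since every basis has size at most three and linear independence of exactly those subsets is what $\mM_A$ encodes.
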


\begin{proof}
By Remark~\ref{rmk:IL_containment}(ii), we have $\sqrt{I_{\Delta}} \subset I_L$ for all $S$ and $L \in Ar_\Delta(S)$ which proves one side of the inclusion. To prove the other side, by the Hilbert's Nullstellensatz it is enough to show that
\[
V(I_{\Delta}) \subset \bigcup_{
\substack{L \in Ar_\Delta(S),\\ S \subset [k\ell]}
} V(I_L).
\] 
Let $A \in V(I_{ \Delta})$ and suppose $S$ is the set of indices of the zero columns in $A$. We now show that the line arrangement $\mM_A$ is compatible and belongs to $Ar_\Delta(S)$.
To see this, take any column $C_i$ of $\mY$. Since $A \in V(I_{ \Delta})$ we have that all $2$-minors of $A_{C_i}$ vanish. Hence each non-zero column of $A_{C_i}$ defines the same point in $\PP^{d-1}$. Hence $C_i \backslash S$ coincide in $\mM_A$. Next, take any row $R_i$ of $\mY$. Since $A \in V(I_{ \Delta})$, we have that all $3$-minors of $A_{R_i}$ vanish. Hence, it follows that all non-zero columns of $A_{R_i}$ lie on the same line in $\PP^{d-1}$. Therefore, if there are at least three distinct points in $R_i \backslash S$, then they are contained in a line of $\mM_A$.
So $\mM_A$ is a compatible with $\Delta$ hence $\mM_A \in Ar_\Delta(S)$. Furthermore, by construction of the ideal $I_{\mM_A}$, we have $A \in Z^o_{\mM_A} \subseteq V(I_{\mM_A})$.
\end{proof}

\begin{remark}
The proof of Theorem~\ref{thm:intersection_thm} shows that the variety $V(I_\Delta)$ is in fact the union of configurations spaces $Z^o_L$ of line arrangements compatible with $\Delta$.
\end{remark}

\begin{example}[Three lines meeting at a point]\label{example:three_lines}
{\rm Let $L$ be the line arrangement on $\{1, \dots, 7\}$ with no zero points. The points are $\mP = \{P_1, P_2, \dots, P_7 \}$ where $P_i = \{i \}$ is a singleton and the lines are $\mL = \{\ell_1, \ell_2, \ell_3 \}$. For the incidences, the points $P_1, P_2, P_3$ lie on $\ell_1$, $P_1, P_4, P_5$ lie on $\ell_2$ and $P_1, P_6, P_7$ lie on $\ell_3$. 
This is the line arrangement number $6$ in Figure~\ref{fig:3xn_line_arrangements}, which depicts each type of line arrangement with at most three lines. Fix $d = 3$. A generating set for the associated ideal is:
\[
I_L = 
\langle 
[123], [145], [167],
[234][567] - [235][467]
\rangle.
\]
Let us consider $I_\Delta$ when $k = 3$ and $\ell = 7$. By Theorem~\ref{thm:intersection_thm}, a line arrangement with the same incidence structure to $L$ appears among the irreducible components of $I_\Delta$. We note that in this case $[234][567]-[235][467]$ lies in $I_L$ but is not contained in the ideal generated by $F(L)$. 
For line arrangements with at most three lines, the only non-determinantal generators of the line arrangement ideals are precisely the condition satisfied by three lines meeting at a point.
}
\begin{figure}
    \centering
    \resizebox{\textwidth}{!}{
    \includegraphics[scale=0.50]{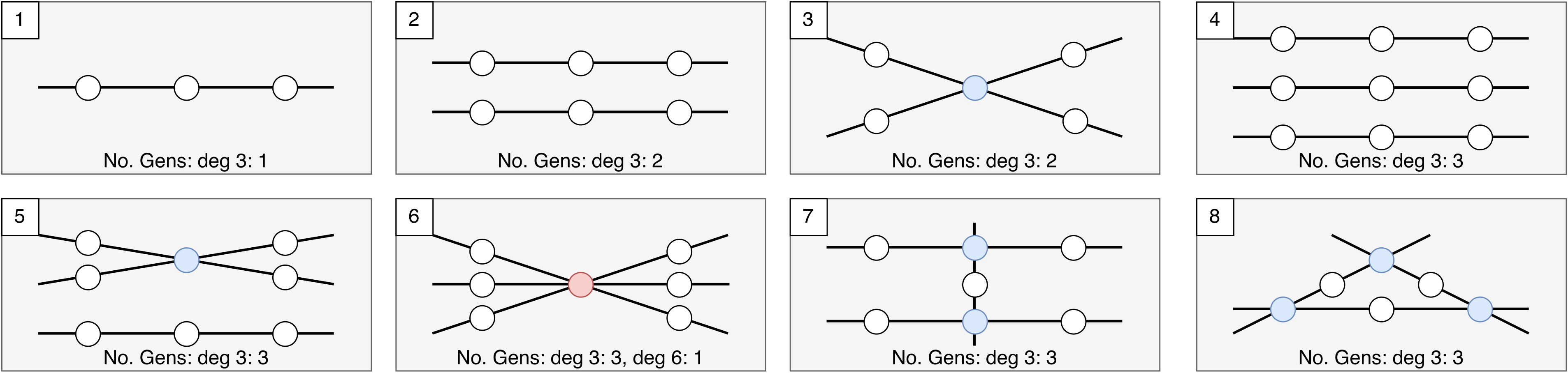}
    }
    \caption{Diagrams for all line arrangements with at most $3$ lines. Note that all possible line arrangements are obtained by adding points and labels at non-intersection points. In the diagrams, the intersection points are marked with colours; blue for double, and red for triple intersections. Below each line arrangement $L$, we list the number of generators of each degree for $I_L$. For example, in number $6$ a generating set of $I_L$ consists of $3$ polynomials of degree $3$ and $1$ of degree $6$.}
    \label{fig:3xn_line_arrangements}
\end{figure}
\end{example}
\subsection{Small line arrangements}\label{sec:small_line_arr}
 
To show that Theorem~\ref{thm:intersection_thm} yields a prime decomposition of $\sqrt{I_{\Delta }}$, it suffices to show that for each line arrangement $L$ appearing in the decomposition, the ideal $I_L$ is prime. Note that determining the primeness of $I_L$ is challenging, as its generating set is not known.

In the following example we will consider line arrangements that contain at most four lines. The defining characteristic of a line arrangement is its incidence structure. We partition the set of line arrangements into classes based on their incidences. We say that two line arrangements have the same incidence structure if there is a bijection between the lines that preserves incidences.

\begin{example}{\rm
In Figures~\ref{fig:3xn_line_arrangements} and \ref{fig:4xn_line_arrangements} we illustrate the incidence structures of line arrangement with at most four lines. For each class we illustrate the smallest line arrangement with the given incidence structure and summarise information about a minimal generating set of the corresponding ideal. We note that all line arrangements appearing in this example lie in the same class as a line arrangement compatible with $\Delta$ for some $k$ and $\ell$.
\begin{itemize}
    \item[$k=2$:] There are two classes of line arrangements with two lines determined by whether or not the lines intersect. In Section~\ref{sec:k=2_case}, we show that the polynomials $F(L)$ form  Gr\"obner bases for their respective ideals. These ideals are generated by $2,3$ and $4$-minors of $X$. In particular, if the two lines intersect then the ideals contain $4$-minors, since for any configuration $A$ of $L$ the columns corresponding to points on these lines lie in a $3$-dimensional linear subspace.

    \item[$k=3$:] There are five classes of arrangements with three lines, see Figure~\ref{fig:3xn_line_arrangements}. The only line arrangement that is not generated by minors is number 6 when three lines meet at a point.
    
    \item[$k=4$:] There are sixteen types of line arrangements with four lines, see Figure~\ref{fig:4xn_line_arrangements}.
    For all line arrangements except numbers $10,14,15$ and $16$, we have computationally verified that $I_L$ are prime.
    Through our calculations we observe that the only non-determinantal polynomials appearing in these ideals are the constraints for $3$ lines to meet at a point, see Example~\ref{example:three_lines}.
\end{itemize} 
}
\begin{figure}
    \centering
    \resizebox{\textwidth}{!}{
    \includegraphics[scale=0.5]{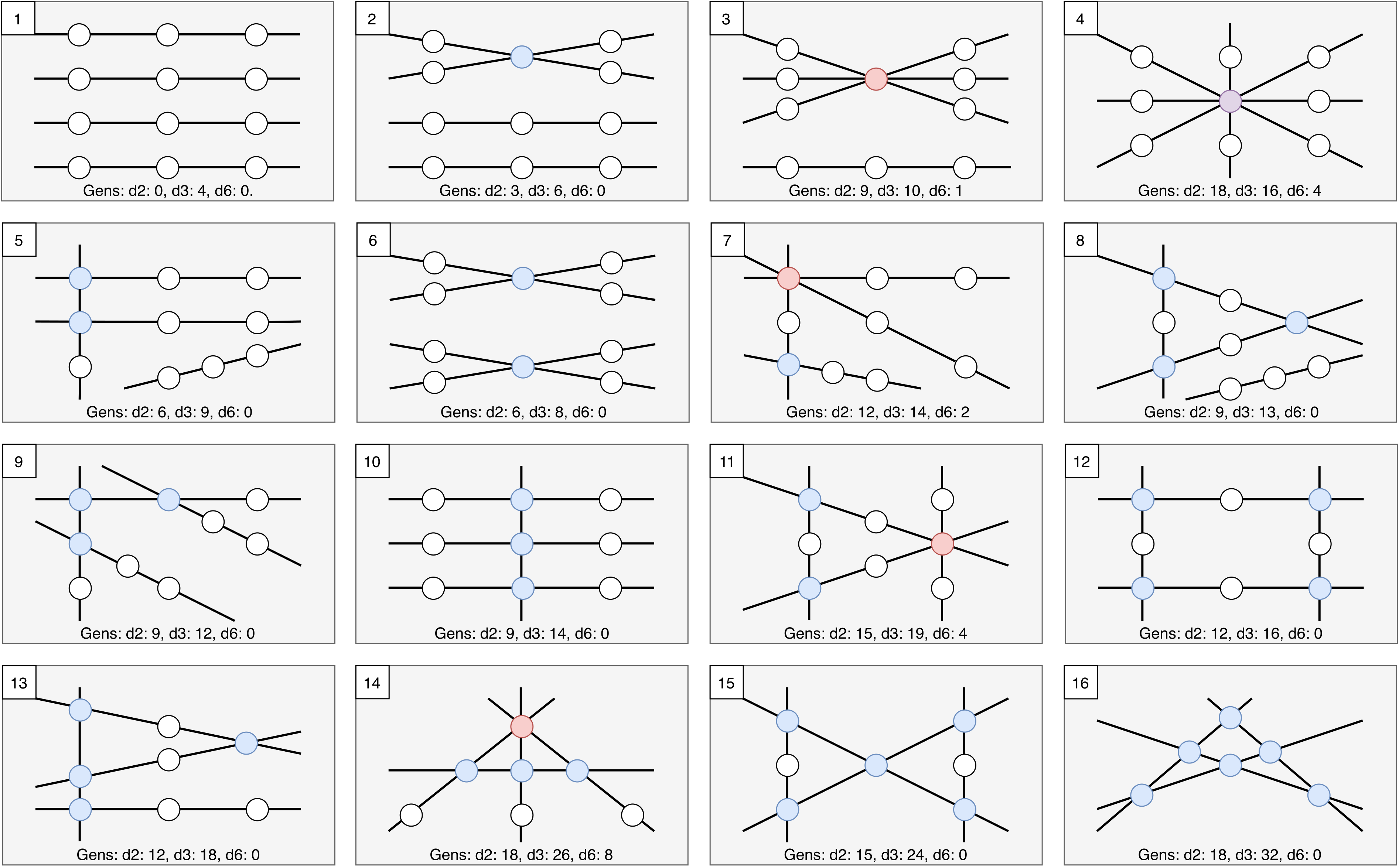}
    }
    \caption{Diagrams for line arrangements of $4$ lines. The line arrangements are ordered by the number of intersection points that are shaded.  Note that all other line arrangements on $4$ lines are obtained by adding and removing non-intersection points to the given lines.
    }
    \label{fig:4xn_line_arrangements}
\end{figure}
\end{example}

\subsection{Geometric proof of primeness}\label{sec:geom_pf_primeness}

We now give a geometric proof of Theorem~\ref{thm:irred_line_arr_build_up} and show that the varieties $V_L$ are irreducible where $L$ is a line arrangement with at most four lines, see Figure~\ref{fig:4xn_line_arrangements}. We fix $d = 3$ throughout this section for simplicity, so all point and line configurations live in the projective plane. However the same arguments easily extend to all $d \ge 3$.
Throughout this section we will use $\hom_{\CC}(\CC^a, \CC^b)$ to denote the set of $b \times a$ matrices with entries in $\CC$. However, as the notation suggests, we think of a point $\phi \in \hom_{\CC}(\CC^a, \CC^b)$ as a linear map $\phi: \CC^a \rightarrow \CC^b$.

\medskip

We first make precise the notion of removing a line or point from a line arrangement as follows.

\begin{definition}
Let $L = (\mP, \mL, \mI)$ be a line arrangement on $[n]$. For a line $\ell \in \mL$ we define the line arrangement $L \backslash \ell = (\mP', \mL \backslash \ell, \mI \cap (\mP' \times (\mL \backslash \ell))$ to be the line arrangement where $\mP'$ is the collection of points of $L$ which do not lie solely on $\ell$.
For a point $p \in [n]$ we define $L \backslash p = (\mP', \mL', \mI \cap (\mP' \times \mL') )$ to be the line arrangement where $\mL' \subseteq \mL$ is the collection of lines which contain at least three points which are not $p$ and $\mP'$ is the collection of non-empty sets $P \backslash p$ for all $P \in \mP$.
\end{definition}

\begin{remark}\label{prop:fiber_irred}
Throughout the paper, we will repeatedly use the following simple topological facts along with a result from algebraic geometry (assuming that $\KK$ is an algebraically closed field):
\begin{itemize}
    \item The continuous image of an irreducible topological space is irreducible \cite[\href{https://stacks.math.columbia.edu/tag/0379}{Lemma 0379}]{stacks-project}.
    \item Any non-empty open subset of an irreducible space is irreducible \cite[Example~1.1.3]{hartshorne2013algebraic}.
    \item A product of irreducible varieties over $\KK$ is irreducible 
    \cite[Exercise~10.1.E]{Vakil}.
\end{itemize}
\end{remark}

We begin with a very simple example that illustrates the structure of the proofs and the application of the results listed in Remark~\ref{prop:fiber_irred} that will follow.
\begin{example}\label{example:3_pts_on_a_line} 
{\rm Let $L$ be the line arrangement with three points lying on a line. Explicitly the points are $\mP = \{P_1, P_2, P_3 \}$ and each point is incident to the unique line of $L$. Let us show that the set $Z_L^o$ is irreducible. Consider the subset
\[
X = \{(x, \phi, (y_1, y_2)) : \phi(1,0) = x \} \subset \CC^3 \times \hom_\CC(\CC^2, \CC^3) \times (\CC^2)^2
\]
which is isomorphic to $\CC^3 \times \CC^3 \times (\CC^2)^2 $. In particular, $X$ is irreducible, since it is a product of irreducible varieties over an algebraically closed field.
Next we consider the map $\psi: X \rightarrow \CC^{3 \times 3}$, which is defined by $\psi(x, \phi, (y_1, y_2)) = (x, \phi(y_1), \phi(y_2)) \in \CC^{3 \times 3}$. This notation means $(x, \phi(y_1), \phi(y_2))$ is a $3 \times 3$ matrix whose columns are $x, \phi(y_1)$ and $\phi(y_2)$. As $\psi$ is a morphism, it is a continuous map and so the image of $\psi $ is irreducible in $\CC^{3 \times 3}$. 
Moreover, we see that $Z_L^o$ is an open subset of the image of $\psi$ as follows. Any point in the image of $\psi$ is a collection of three vectors in $\CC^3$ lying in the column space of $\phi$. If we remove from $\psi(X)$ the degenerate cases in which any pair of these columns are equal or any column is equal to the origin, then we obtain an open subset of $\psi(X)$. This open subset consists of all non-degenerate configurations of a line passing through three points in the projective plane, in other words the open subset is $Z_L^o$. Therefore, $Z_L^o$ is irreducible.
}
\end{example}

Throughout the proofs in this section, an identical argument to the above example shows that $Z_L^o$ is an open subset of the image of $\psi$.
For example, by changing $(\CC^2)^2$ to $(\CC^2)^{n-1}$ in Example~\ref{example:3_pts_on_a_line} we can prove that the space of configurations of $n$ points on a line is irreducible.

\begin{proof}[{\bf Proof of Theorem~\ref{thm:irred_line_arr_build_up}}]
Let $m$ be the dimension of the ambient space containing $V_L$ and recall that $Z^o_{L \backslash \ell}$ is the open subset of $V_{L \backslash \ell}$ consisting of all configurations of $L\backslash \ell$. We proceed by constructing a map $\psi : X \rightarrow \CC^m$ from an irreducible space $X$ to $\CC^m$ such that the image $\psi(X)$ contains $Z^o_L$ as an open subset. We construct $X$ and $\psi$ by taking cases on the number of points of incidence between $\ell$ and the configuration $L \backslash \ell$. By assumption there are either zero, one or two such incidences.
\medskip

\textbf{Case 1.} Assume there are no incidences. Define $X = Z^o_{L \backslash \ell} \times \hom(\CC^2, \CC^3) \times (\CC^2)^{|\ell|}$, which is irreducible as it is a product of irreducible spaces. Let us define the map 
\[
\psi: X \rightarrow \CC^m : \ 
(x, \phi, (y_1, \dots, y_{|\ell|})) \mapsto (x, \phi(y_1), \dots, \phi(y_{|\ell|})).
\] We see that $Z^o_L$ is an open subset of the image of $\psi$. Hence $V_L = \overline{Z^o_L}$ is irreducible.

\smallskip

\textbf{Case 2.} Assume that there is exactly one point of incidence between $\ell$ and $L \backslash \ell$. Given a configuration $x \in Z^o_{L \backslash \ell}$ let us denote by $x_1 \in \CC^3$ the point of incidence, i.e.~$x_1$ is a column of $x$ corresponding to the point of incidence between $\ell$ and $L \backslash \ell$. We define
\[
X = \{(x, \phi, (y_1, \dots, y_{|\ell|-1})) : \phi(1,0) = x_1 \} \subset Z^o_{L \backslash \ell} \times \hom_\CC(\CC^2, \CC^3) \times (\CC^2)^{|\ell|-1}.
\]
Notice that for each $x \in Z^o_{L \backslash \ell}$, the space of homomorphisms $\phi : \CC^2 \rightarrow \CC^3$ such that $\phi(1,0) = x_1$ can be seen as the space of $3 \times 2$ matrices whose first column is $x_1$, i.e.~it is isomorphic to $\CC^3$. Therefore $X$ is isomorphic to $Z^o_{L \backslash \ell} \times \CC^3 \times (\CC^2)^{|\ell|-1}$. Hence $X$ is irreducible.
Let us define the map 
\[
\psi: X \rightarrow \CC^m : \ 
(x, \phi, (y_1, \dots, y_{|\ell|-1})) \mapsto (x, \phi(y_1), \dots, \phi(y_{|\ell|-1})).
\] We see that $Z^o_L$ is an open subset of the image of $\psi$. Hence $V_L = \overline{Z^o_L}$ is irreducible.

\smallskip

\textbf{Case 3.} Assume that there are exactly two points of incidence between $\ell$ and $L \backslash \ell$. Given a point $x \in Z^o_{L \backslash \ell}$ let us denote by $x_1, x_2 \in \CC^3$ columns of $x$ which are representatives for the points of incidence. We define
\[
X = \{ (x, \phi, (y_1, \dots, y_{|\ell|-2})) : \phi(1,0) = x_1, \ \phi(0,1) = x_2 \} \subset
Z^o_{L \backslash \ell} \times \hom_\CC(\CC^2, \CC^3) \times (\CC^2)^{|\ell|-2}.
\]
For each $x \in Z^o_{L \backslash \ell}$ we see that $\phi$ is totally determined by $x_1$ and $x_2$. Hence $X$ is isomorphic to $Z^o_{L \backslash \ell} \times (\CC^2)^{|\ell|-2}$ which is irreducible. Let us define the map 
\[
\psi: X \rightarrow \CC^m : \ 
(x, \phi, (y_1, \dots, y_{|\ell|-2})) \mapsto (x, \phi(y_1), \dots, \phi(y_{|\ell|-2})).
\] We see that $Z^o_L$ is an open subset of the image of $\psi$. Hence $V_L = \overline{Z^o_L}$ is irreducible.
\end{proof}

\begin{corollary}\label{cor:irred_4_or_less_lines}
The ideals of line arrangements with at most $4$ lines are irreducible.
\end{corollary}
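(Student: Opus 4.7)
The plan is to argue by induction on the number of lines in $L$, using Theorem~\ref{thm:irred_line_arr_build_up} as the engine of the inductive step.

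For the base cases, a line arrangement with zero lines is simply a tuple of distinct projective points (together with a possibly non-empty zero-set $S$), whose configuration space is an open subset of a product of affine spaces and is therefore irreducible by the facts recalled in Remark~\ref{prop:fiber_irred}. A line arrangement with a single line is irreducible by a direct generalisation of Example~\ref{example:3_pts_on_a_line}: one parametrises configurations by a linear map $\phi \in \hom_{\CC}(\CC^2, \CC^3)$ together with a tuple of vectors in $\CC^2$ indexed by the points on that line, and then takes the joint image of this irreducible parameter space.

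For the inductive step, given $L$ with two, three, or four lines, the task reduces to producing a single line $\ell \in \mL$ that meets $L \backslash \ell$ in at most two points; then Theorem~\ref{thm:irred_line_arr_build_up}, combined with the inductive hypothesis applied to $L \backslash \ell$, yields irreducibility of $V_L$. When $L$ has two lines, any choice of $\ell$ meets the remaining line in at most one point. When $L$ has three lines, any choice of $\ell$ meets each of the remaining two lines in at most one point, for a total of at most two. Both cases therefore go through immediately.

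The principal obstacle is the four-line case. Here a chosen line $\ell$ may meet the other three at three distinct points of $L \backslash \ell$, so the induction breaks down for those incidence structures in which every line has at least three incidences with the rest. Inspecting Figure~\ref{fig:4xn_line_arrangements} shows that this only occurs when the four lines pairwise meet and no three are concurrent; in every other incidence structure, either a pair of non-intersecting lines or a triple point allows one to select $\ell$ with at most two incidences, and the induction proceeds. For the remaining recalcitrant cases my plan is to prove a three-incidence variant of Theorem~\ref{thm:irred_line_arr_build_up} by parametrising $\ell$ via $\phi \in \hom_{\CC}(\CC^2, \CC^3)$ with two prescribed column images and then imposing the single codimension-one collinearity condition that the third prescribed image be hit; establishing irreducibility of the resulting constrained fibre product is the central technical difficulty I would expect to encounter.
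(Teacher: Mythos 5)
Your inductive framework, and your identification of the exceptional case, both match the paper: everything except the arrangement in which four lines pairwise intersect in six distinct points (number $16$ in Figure~\ref{fig:4xn_line_arrangements}) is handled by repeated application of Theorem~\ref{thm:irred_line_arr_build_up}, exactly as you describe. The problem is your plan for that exceptional case. Fixing $\phi(1,0)=x_{12}$ and $\phi(0,1)=x_{13}$ determines $\phi$ completely, so ``imposing the condition that $x_{14}$ be hit'' amounts to cutting $Z^o_{L\backslash\ell_1}$ by the determinantal hypersurface expressing collinearity of $x_{12},x_{13},x_{14}$. A hypersurface section of an irreducible variety need not be irreducible, and you give no argument for why this one is; this is precisely the difficulty you flag, and as stated it is a genuine gap rather than a deferred technicality.

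The paper avoids the constraint altogether by changing what is removed. Instead of deleting only $\ell_1$, it deletes $\ell_1$ \emph{together with} the two intersection points $P_{13}=\ell_1\cap\ell_3$ and $P_{14}=\ell_1\cap\ell_4$, keeping only $P_{12}$. The resulting arrangement is built up by Theorem~\ref{thm:irred_line_arr_build_up}, and $\ell_1$ is then reattached through the single prescribed point $x_{12}$ via a free choice of $\phi$ with $\phi(1,0)=x_{12}$ --- one incidence, not three. The deleted points are recovered for free: in a configuration the lines $\ell_1,\ell_3$ (resp.\ $\ell_1,\ell_4$) correspond to distinct $2$-dimensional subspaces of $\CC^3$, so $\ell_1\cap\ell_3$ and $\ell_1\cap\ell_4$ are automatically $1$-dimensional, and choosing representatives $z_{13},z_{14}$ contributes only a factor $\CC\times\CC$ to the parameter space. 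The total space $X'$ is thus a product of irreducibles, and its image contains $Z^o_L$ as an open subset. If you want to salvage your write-up, replace your constrained fibre product by this ``delete the extra intersection points and regenerate them as intersections of planes'' construction.
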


\begin{proof}
By Theorem~\ref{thm:irred_line_arr_build_up} we can build up most line arrangements with at most $4$ lines from smaller line arrangements. Figure~\ref{fig:build_up_4_line_configs} shows how to build up connected configurations, i.e.~arrangements in which there is a path between any two points along lines within the configuration. In the disconnected cases, one can build up the configurations by looking at each connected component separately. The only type of line arrangement that cannot be built up is line arrangement $16$ in Figure~\ref{fig:4xn_line_arrangements} that contains $4$ lines such that each pair of lines intersects at a point. Let $L$ be a line arrangement of this type with $m$ points and label the lines in this arrangement $\ell_1, \ell_2, \ell_3$ and $\ell_4$. Then $L \backslash \ell_1$ is a line arrangement of type $8$ in Figure~\ref{fig:3xn_line_arrangements} and $V_{L \backslash \ell_1}$ is irreducible by Theorem~\ref{thm:irred_line_arr_build_up}.

\begin{figure}
    \centering
    \resizebox{0.75\textwidth}{!}{
    \includegraphics{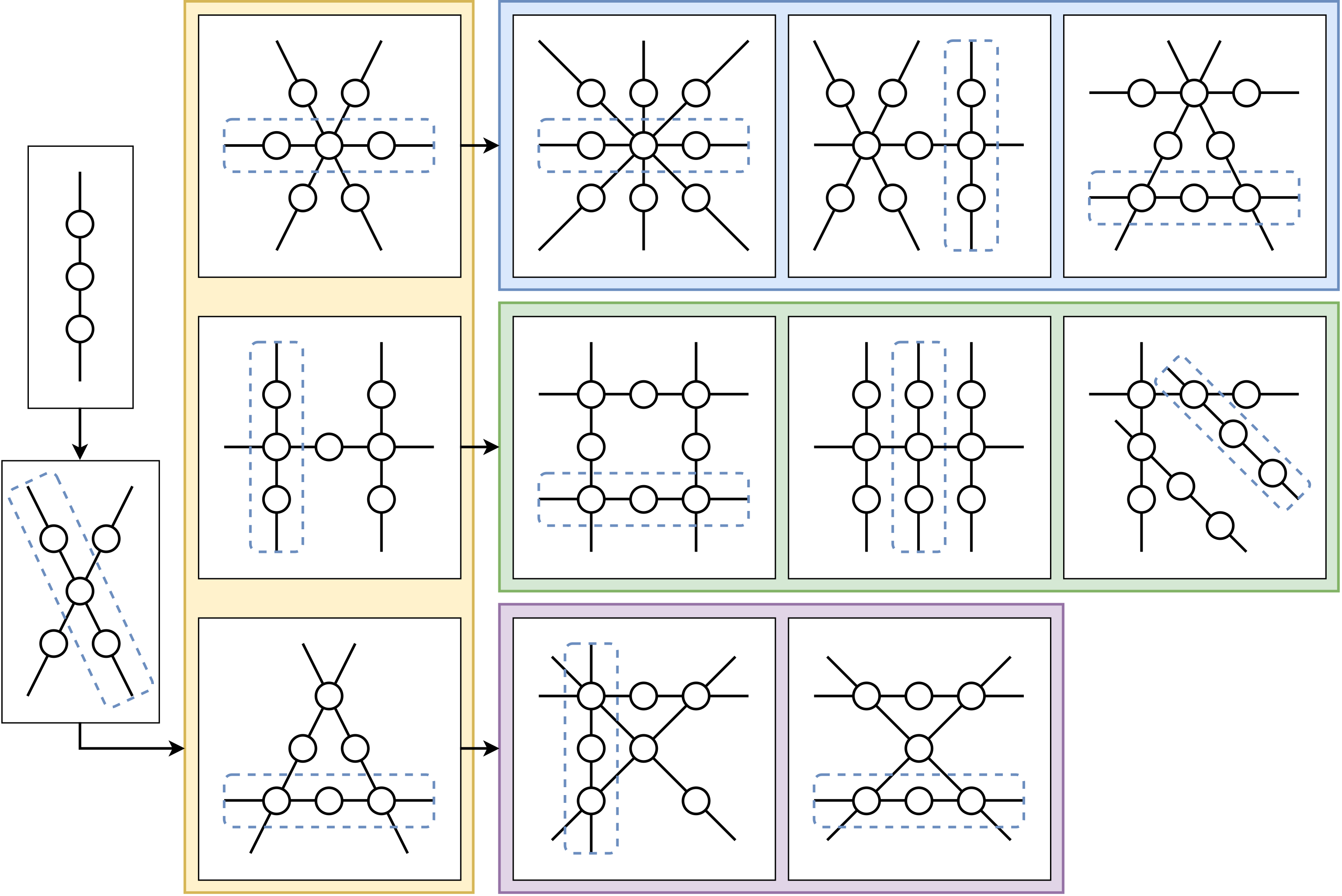}}
    \caption{Depiction of the process of building up point and line configurations with $4$ lines. The base case is a single line shown in the top left. At each stage, the newly added line is indicated with a dotted box. The large shaded boxes indicate collections of configurations which are built up from a common arrangement. 
    }
    \label{fig:build_up_4_line_configs}
\end{figure}

Recall that $Z^o_L \subset V_L$ is the open subset of configurations in which all lines and points in the arrangement are distinct. Denote by $P_{12}$ the point in the line arrangement which is the intersection of $\ell_1$ and $\ell_2$. For each configuration $x \in Z^o_L$ we denote by $x_{12}$ a non-zero vector in $\CC^3$ which spans the $1$-dimensional linear subspace containing all columns of $x$ corresponding to $P_{12}$.
Similarly we define $x_{13}, P_{13}, x_{14}$ and $P_{14}$ for the intersection points $\ell_1, \ell_3$ and $\ell_1, \ell_4$ respectively. By Theorem~\ref{thm:irred_line_arr_build_up}, the variety $V_{L \backslash \{\ell_1, P_{13}, P_{14}\}}$ is irreducible. We denote $W^o \subset V_{L \backslash \{\ell_1, P_{13}, P_{14}\}}$ for the open subset in which all projective lines and points are distinct in the projective plane.
We define
\[
X = \{(x, \phi, (y_1, \dots, y_{|\ell_1|-3})) : \phi(1,0) = x_{12} \} \subset W^o \times \hom_\CC(\CC^2, \CC^3) \times (\CC^2)^{|\ell_1|-3}.
\]
We have that $X$ is isomorphic to $W^o \times \CC^3 \times (\CC^2)^{|\ell_1|-3}$ and so $X$ is irreducible. 
Given a point $x \in X$, we identify the lines $\ell_1, \dots, \ell_4$ of $L$ with the corresponding $2$-dimensional linear subspaces of $\CC^3$ spanned by the columns of $x$ containing the points on each respective line. We now define 
\[
X' = \{(x, z_{13}, z_{14}) : z_{13} \in \ell_1 \cap \ell_3, \ z_{14} \in \ell_1 \cap \ell_4 \} \subset X \times \CC^3 \times \CC^3.
\]
We have that for each point $x \in X$, the $2$-dimensional linear subspaces  $\ell_1$ and $\ell_3$ are distinct. Since the subspaces are contained within a $3$-dimensional subspace, we have that the intersection $\ell_1 \cap \ell_3$ is a $1$-dimensional linear subspace. In other words, the intersection of two projective lines is a projective point.
 
In particular 
$z_{13}$ can be free chosen on this subspace and so is parametrised by $\CC$. Similarly for the subspaces $\ell_1$ and $\ell_4$. And so $X'$ is isomorphic to $X \times \CC \times \CC$ hence $X$ is irreducible.
Recall that $m$ is the number of points in the line arrangement $L$, we define the map
\[
\psi : X' \rightarrow \CC^{3m} : ((x, \phi, (y_1, \dots, y_{|\ell_1|-3})), z_{13}, z_{14}) \mapsto (x,\phi(y_1), \dots, \phi(y_{|\ell_1|-3}), z_{13},z_{14})).
\]
The image of $\psi$ contains $Z^o_L$ as an open subset. Since $X'$ is irreducible, $Z^o_L$ is irreducible as well.
\end{proof}

\begin{remark}
We note that even though all results of this section are stated for point and line arrangements contained within the projective plane, they can be carefully extended to higher dimensional projective spaces. We would like to remark that
 irreducibility of point and line arrangements is also studied in \cite{MatroidsCIStatements} for a so-called \emph{tree-like} configurations. These are line arrangements in which the underlying graph is a tree. The paper \cite{MatroidsCIStatements} takes a matroidal perspective and relies on an understanding of \emph{combinatorial closures} of matroid ideals. These ideals can be thought of as generalisations of determinantal ideals $\langle F(L) \rangle$. However, their associated varieties are not irreducible, even for matroids on very small ground sets. 
\end{remark}

\section{The two row case}\label{sec:k=2_case}
In this section, we tackle the challenging problem of finding generating sets for the ideals of point and line arrangements. In \cite{sidman2019geometric}, the authors generalise the non-determinantal polynomial appearing in the ideal of the line arrangement in Example~\ref{example:three_lines}. In particular, they construct non-trivial polynomials of arbitrarily high degree for various line arrangements. Furthermore, by the Mn\"ev and Sturmfels universality theorem we know that the variety of point and line configurations can have arbitrary singularities up to a mild equivalence, see e.g.~\cite{sturmfels1989matroid, vakil2006murphy, lee2013mnev}.

Note that the line arrangements that appear in Theorem~\ref{thm:intersection_thm} contain at most $k$ lines and $\ell$ points. Therefore the decomposition may be refined to a minimal prime decomposition of $I_\Delta$ for all $\ell$ and $k \le 4$. We proceed to investigate the prime ideals of this decomposition in the case $k = 2$. 
More precisely, we prove the second part of Theorem~\ref{thm:intro_intersection}, namely Theorem~\ref{thm:k=2_min_prime_decomp} using Gr\"obner bases and perturbation arguments. This result, in particular, provides an explicit minimal prime decomposition of $\sqrt{I_{\Delta }}$. Moreover, we show that the prime components can be all represented as hypergraph ideals that coincide with the ideals of line arrangements described in Definition~\ref{def:line_arr_polys}.

\subsection{Hypergraph constructions and ideals}
We begin by describing precisely properties of hypergraphs that we will use to construct generating sets of the prime components of $I_\Delta$ when $k = 2$.

\begin{definition}
Let $H$ be a hypergraph on the vertex set $[n]=\{1,\ldots,n\}$. Recall that $H$ is identified with its edge set. We define the following notions for hypergraphs.
\begin{itemize}
    \item A \emph{path} from vertex $a$ to vertex $b$, is a sequence of edges $e_1, \dots, e_r \in H$ such that $a \in e_1$, $b \in e_r$ and for each $i \in [r-1]$ we have that $e_i \cap e_{i+1} \neq \emptyset$.
The \emph{connected component} of $H$ containing a vertex $v$ is the set of vertices $w$ such that there exists a path from $v$ to $w$.
    \item An \emph{$r$-clique} of $H$ is a hypergraph on a subset $S \subseteq [n]$ such that all $r$-subsets of $S$ are edges of $H$.
 The \emph{$r$-completion} of $H$ denoted by $H^{[r]}$ is the smallest hypergraph containing $H$ and satisfying the property that if $a_1, \dots, a_r$ are vertices lying in the same connected component of $H$ then $\{a_1, \dots, a_r\}$ is an edge in $H^{[r]}$. Equivalently $H^{[r]}$ is the hypergraph obtained from $H$ by adding all $r$-subsets which lie in a single connected component.
    \item The \emph{induced hypergraph $H_{V}$} of $H$, for a subset $V \subseteq [n]$, is the hypergraph obtained from $H$ by removing all edges  containing a vertex in $V$. Explicitly, $H_V = \{ e \in H : e \cap V = \emptyset \}$.
    Unless otherwise stated, the hypergraph $H_V$ is assumed to be defined on the vertex set $[n] \backslash V$.
\end{itemize}
\end{definition}

The hypergraph $\Delta(k, \ell)$ is connected, i.e.~it has only one connected component. In the case $k = 2$, it is straightforward to show that any induced hypergraph of $\Delta$ contains at most four connected components. For each subset $S \subseteq [k\ell]$, we define the hypergraph ideal $I_S$ as follows. 

\begin{definition}\label{def:I_S_and_H(S)}
Let $D(k, \ell)$, or simply $D$ if $k$ and $\ell$ are fixed, be the collection of subsets $S \subseteq [k \ell]$  such that either $S = \emptyset$ or the induced hypergraph $\Delta_S$ has at least two components. We denote by $D^c$ be the complement of $D$.
We denote $\mC(S)$ for the union of all $C_i$, $i \in [\ell]$ with $C_i \cap S = \emptyset$. 
\begin{itemize}
    \item If $S \in D^c$ then we define
    \[
    H(S) = 
    \left\{ 
    S,
    \binom{\mC(S)}{2},
    \binom{(R_1 \backslash S) \cup \mC(S)}{3},
    \binom{(R_2 \backslash S) \cup \mC(S)}{3},
    \binom{[k\ell]\backslash S}{4}
    \right\}.
    \]
    
    \item If $S\in D$ then we define $H(S) = (\Delta_S)^{[3]} \cup S$, where $(\Delta_S)^{[3]}$ is the $3$-completion of $\Delta_S$.
\end{itemize}
In the hypergraph $H(S)$, the set $S$ is thought of as a collection of singletons. We define the ideal $I_S$ to be the hypergraph ideal $I_{H(S)}$. We usually write $I_0$ for $I_{\emptyset}$, which is defined as \begin{equation*}\label{eq:L0}
    I_0=I_{\emptyset}=\langle [B] : B \subset [\ell k], \ |B| = t \rangle + I_\Delta.
\end{equation*}
\end{definition}

\begin{remark}
In the above definition when $S \in D^c$ we note that the hypergraph ideal contains some $4$-subsets. We can think of these $4$-subsets as a condition for all columns of a point $A \in V(I_H)$ to lie in a $3$-dimensional subspace of $\CC^d$. Furthermore, we can think of the columns $(R_1 \backslash S \cup \MC(S))$ and $(R_2 \backslash S \cup \MC(S))$ as lying on two (distinct) projective lines, i.e.~$2$-dimensional linear subspaces. These two projective lines meet at a point corresponding to $\MC(S)$.
\end{remark}

\subsection{Generators of prime components}\label{sec:k=2_I_S_gens}

Hypergraph ideals admit a canonical minimal generating set that is straightforward to compute. More precisely, an edge $e \in H$ gives rise to redundant generators if and only if there exists an edge $e' \in H$ such that $e' \subsetneq e$.
For the ideals $I_S$ with $S \in D^c$, this means that all $4$-minors intersecting the columns in $\mC(S)$ are redundant. We illustrate the minimal generating set of $I_S$ in an example.

\begin{example}[Example of $I_S$]\label{example:I_star_S}
{\rm Let $k=2,\ell=6$ and
$S = \{1,3,6,8\}$. So in this case we have
\[
\mY = 
\begin{bmatrix}
\boxed{1} & \boxed{3} & 5 & 7 & 9 & 11 \\
2 & 4 & \boxed{6} & \boxed{8} & 10 & 12
\end{bmatrix}
\]
where the boxed elements are those in $S$. The induced hypergraph $\Delta_S$ is given by
\[
\Delta_S = \left\{ 
\{9,10\},
\{11,12\},
\binom{\{5,7,9,11\}}{3}, \binom{\{2,4,10,12\}}{3}
\right\}.
\]
Note that $\Delta_S$ has exactly one connected component so $S \in D^c$. The columns of $\mY$ that are properly contained in $[k\ell] \backslash S$ are the right most two columns and so $\mC(S) = \{9,10,11,12 \}$. Thus we have:
\[
    H(S) = \left\{
    1,3,6,8,
    \binom{\{9,10,11,12 \}}{2}, 
    \binom{\{5,7,9,10 ,11,12\}}{3},
    \binom{\{2,4,9,10,11,12 \}}{3},
    \binom{[12] \backslash \{1,3,6,8 \}}{4}
    \right\}.
\]
In this case, the canonical minimal generating set of $I_S$ contains only the $4$-minors corresponding to the edge $\{2,4,5,7\}$. All other $4$-minors are generated by the $2$ and $3$-subsets in $H(S)$.
}
\end{example}

The ideals $I_S$ are in fact ideals of line arrangements $L(S)$ defined as follows.

\begin{definition}\label{def:line_arr_L(S)}
Let $S \subseteq [k\ell]$ be a subset of points of the matrix $\mY$. Recall that $\mC(S)$ is the union of all columns $C_i$ of $\mY$ with $C_i \cap S = \emptyset$. We define the line arrangement $L(S) = (\mP, \mL, \mI)$ as follows.
\begin{itemize}
    \item If $S = \emptyset$ then we define the points $\mP$ to be the columns $C_i$ for all $i \in [\ell]$ and define $\mL$ to be a single line which passes through all points.
    \item If $S \neq \emptyset$ then we define the points $\mP$ to be $\mC(S)$ along with all singletons $C_i \backslash S$ for $i \in [\ell]$. Note, if $C_i \subseteq S$ for some $i \in [\ell]$ then this does not give rise to a point of $L$. There are (at most) two lines in $\mL$ which are incident to the points $\{P \in \mP : P \cap R_i \neq \emptyset \}$ for $i \in \{1,2 \}$ respectively. If any line would contain fewer than three points, then it is not a line of $L$.
\end{itemize}
\end{definition}

\begin{example}[Continuation of Example~\ref{example:I_star_S}]
{\rm Suppose $k = 2, \ell = 6$ and $S = \{1,4,6,8\}$. The line arrangement $L(S) = (\mP, \mL, \mI)$ has the points and lines
\[
\mP = \{
\{2\}, \{4 \}, \{5\}, \{7\},
\{9,10,11,12\}\} \quad\text{and}\quad
\mL = \{\ell_1, \ell_2\}.
\]
The points $\{5\}, \{7\}, \{9,10,11,12\}$ lie on $\ell_1$ and $\{2\}, \{4 \}, \{9,10,11,12\}$ lie on $\ell_2$. Notice that the point $\{9,10,11,12 \}$ is the union of two columns of $\mY$. Also the lines $\ell_1$ and $\ell_2$ contain the corresponding points in rows $R_1$ and $R_2$ of $\mY$ respectively. Hence, $L(S)$ is compatible with $\Delta$.
}
\end{example}

Recall from Definition~\ref{def:I_S_and_H(S)} that $I_S$ is a hypergraph ideal. We first state our main results.

\begin{theorem}\label{thm:radical_I_S_=_I_L}
Let $k = 2$ and $S\subset [k\ell]$. Then the radical of the hypergraph ideal $I_S$ is the ideal of the line arrangement $L(S)$ with zero points $S$, that is $\sqrt{I_S} =  I_{L(S)}$.
\end{theorem}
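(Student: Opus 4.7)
The plan is to prove the identity $\sqrt{I_S}=I_{L(S)}$ by establishing the two inclusions separately. The forward inclusion is almost formal, using that every generator of $I_S$ vanishes on configurations of $L(S)$ together with the primeness of $I_{L(S)}$. The reverse inclusion is the main content: it requires approximating an arbitrary zero of $I_S$ by genuine configurations of $L(S)$ in the Euclidean topology and then invoking Lemma~\ref{prop:topology} to pass to the Zariski closure.

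For $\sqrt{I_S}\subseteq I_{L(S)}$, my first step is to check that each canonical generator of $I_S$ coming from an edge of $H(S)$ is a minor forced to vanish on every configuration of $L(S)$. The singletons of $H(S)$ correspond to zero columns, the $2$-subsets of $\mC(S)$ correspond to proportional columns at the unique multi-element point, the $3$-subsets in $(R_i\backslash S)\cup\mC(S)$ correspond to collinearities along the two lines of $L(S)$, and when $S\in D^c$ the $4$-subsets of $[k\ell]\backslash S$ reflect that all non-zero columns lie in the $3$-dimensional subspace spanned by these two lines. Hence $I_S\subseteq I(Z^o_{L(S)})=I_{L(S)}$. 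Since $L(S)$ has at most two lines, Corollary~\ref{cor:irred_4_or_less_lines} gives that $V_{L(S)}$ is irreducible, so $I_{L(S)}$ is prime; being radical, we conclude $\sqrt{I_S}\subseteq I_{L(S)}$.

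For $I_{L(S)}\subseteq\sqrt{I_S}$, by Hilbert's Nullstellensatz it suffices to show the set-theoretic containment $V(I_S)\subseteq V_{L(S)}=\overline{Z^o_{L(S)}}$. Given $A\in V(I_S)$, the defining minors of $I_S$ force $A$ to have zero columns indexed by $S$, proportional columns indexed by $\mC(S)$, each row's columns lying in a $2$-dimensional subspace, and (when $S\in D^c$) all non-zero columns contained in a common $3$-dimensional subspace equal to the linear span of the two lines. Using the parametrization of $Z^o_{L(S)}$ employed in the proof of Theorem~\ref{thm:irred_line_arr_build_up} --- first choose the ambient $3$-dimensional subspace, then the two $2$-dimensional subspaces meeting along the span of $\mC(S)$, then the free singleton points on each line --- I would construct a one-parameter family $\{A_\varepsilon\}_{\varepsilon>0}$ by perturbing the free parameters of $A$ by a small generic $\varepsilon$. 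By construction $A_\varepsilon\in Z^o_{L(S)}$ and $A_\varepsilon\to A$ in the Euclidean topology as $\varepsilon\to 0$, so Lemma~\ref{prop:topology} places $A$ in $V_{L(S)}$. In the case $S\in D$, the connected components of $\Delta_S$ give rise to independent pieces of $L(S)$, and the perturbation is performed componentwise.

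The main obstacle will be ensuring that, for arbitrary $A\in V(I_S)$, the perturbation $A_\varepsilon$ actually lands in the open subset $Z^o_{L(S)}$, i.e.~that every accidental coincidence or collinearity of $A$ not imposed by $L(S)$ is genuinely destroyed for small $\varepsilon>0$, especially when $A$ exhibits several overlapping degeneracies at once. The crucial input is the irreducibility of the parametrizing space in Theorem~\ref{thm:irred_line_arr_build_up}: its non-degenerate locus is Zariski-open and dense, so a generic choice of perturbation direction automatically avoids all unwanted degeneracies and yields an admissible $A_\varepsilon\in Z^o_{L(S)}$.
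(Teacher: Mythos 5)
Your overall architecture matches the paper's: the easy inclusion $I_S\subseteq I_{L(S)}$ from vanishing on configurations, and the hard inclusion via approximating an arbitrary $A\in V(I_S)$ by points of $Z^o_{L(S)}$ in the Euclidean topology and invoking Lemma~\ref{prop:topology}. Where you diverge is in how the approximation is produced. The paper does it by hand: Lemma~\ref{lem:perturb_in_p_line} perturbs zero columns and coinciding points within a fixed line, and the proof then splits into the cases $S=\emptyset$, $S\in D\setminus\emptyset$, $S\in D^c$, with an explicit extra step (choosing $\underline{b_3}\notin W_1$ with controlled norm) to separate the two planes when they coincide. You instead propose to factor everything through a polynomial parametrization and argue by genericity. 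This can be made to work, and is arguably cleaner, but one point needs care: the parametrization you cite, the one \emph{in the proof of} Theorem~\ref{thm:irred_line_arr_build_up}, is built over the open stratum $Z^o_{L\backslash\ell}$, so its image does not contain degenerate matrices $A$ (e.g.\ those where the two planes of $L(S)$ coincide, or where points of the sub-arrangement collide) --- and those are exactly the $A$ you must handle. You need the ``closed'' version of the parametrization you sketch (ambient $3$-space, then two planes allowed to coincide, then points allowed to collide or vanish), verify from the minors in $I_S$ that every $A\in V(I_S)$ lies in its image (this is the content of the paper's case analysis), and check that the preimage of $Z^o_{L(S)}$ is a nonempty Zariski-open subset of that affine parameter space; a generic line through the preimage of $A$ then meets the degenerate locus in finitely many points, which justifies your family $A_\varepsilon$. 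With that repair your route is a valid alternative: it trades the paper's explicit norm estimates for a genericity argument in an irreducible parameter space (and, if you stay Zariski-continuous throughout, you could even dispense with the Euclidean topology altogether). The appeal to Corollary~\ref{cor:irred_4_or_less_lines} in the forward inclusion is harmless but unnecessary, since $I_{L(S)}=I(Z^o_{L(S)})$ is radical by definition.
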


\begin{prop}\label{prop:I_S=Ar_D(S)}
Let $k = 2$. For each subset $S \subseteq [k\ell]$ we have
\[
\sqrt{I_S} = I_{L(S)} = \bigcap_{L \in Ar_\Delta(S)} I_L.
\]
\end{prop}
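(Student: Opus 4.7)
The first equality $\sqrt{I_S} = I_{L(S)}$ is exactly Theorem~\ref{thm:radical_I_S_=_I_L}, so the new content is the second equality $I_{L(S)} = \bigcap_{L \in Ar_\Delta(S)} I_L$. The plan is to prove it via two inclusions.

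The forward inclusion $\bigcap_{L \in Ar_\Delta(S)} I_L \subseteq I_{L(S)}$ reduces to verifying that $L(S)$ itself lies in $Ar_\Delta(S)$, so that $I_{L(S)}$ is one of the ideals appearing in the intersection. Checking compatibility from Definition~\ref{def:comb_line_arr} is routine: by construction of $L(S)$, each column $C_i$ has $C_i \setminus S$ sitting inside exactly one block of the partition of $L(S)$ (either a singleton $C_i \setminus S$ when $|C_i \cap S|=1$, or the merged block $\mC(S)$ when $C_i \cap S = \emptyset$), and the row-line $\ell_i$ of $L(S)$ is drawn precisely so as to pass through those points arising from $R_i \setminus S$ whenever at least three distinct points are present.

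For the reverse inclusion $I_{L(S)} \subseteq I_L$ for every $L \in Ar_\Delta(S)$, I plan to argue varietally by showing $V(I_L) \subseteq V(I_{L(S)})$. I will take an arbitrary configuration $A \in V(I_L)$ and verify that every polynomial generator of $I_{L(S)}$---which by Theorem~\ref{thm:radical_I_S_=_I_L} can be read off the hypergraph $H(S)$---vanishes on $A$. The zero-entry and singleton column-coincidence generators are already enforced by the compatibility of $L$ with $\Delta$; for $S \in D$, the generators coming from the $3$-completion $(\Delta_S)^{[3]}$ are handled by a connected-component argument on $\Delta_S$, using that the row-line incidences of $L$ force the points in a single connected component onto a common projective line.

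The main obstacle is the case $S \in D^c$, where $H(S)$ introduces $2$-minor generators on pairs inside $\mC(S)$ together with the $4$-minor generators on $[k\ell]\setminus S$. Here the plan is to exploit the hypothesis $k=2$ crucially: compatibility of $L$ forces at most two row-lines, and both of them must pass through every block of $L$ that meets $\mC(S)$; since two distinct projective lines can share at most one point, all blocks of $L$ meeting $\mC(S)$ are forced to collapse to a common projective point on the Zariski closure $V(I_L)$, yielding the needed $2$-minor vanishings on $\mC(S)$. The $4$-minor generators then follow from the intersecting-line polynomials in Definition~\ref{def:line_arr_polys} applied to the two row-lines, whose union spans at most a $3$-dimensional linear subspace of $\KK^d$. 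The boundary cases in which one or both row-lines are absent (because the corresponding row has fewer than three distinct points in $L$) are addressed by a direct inspection of $H(S)$ in those sub-cases, where the required $2$-minors on $\mC(S)$ either reduce to column-coincidence generators of $L$ or are already implied by the singleton structure.
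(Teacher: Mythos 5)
Your architecture is the same as the paper's: the first equality is Theorem~\ref{thm:radical_I_S_=_I_L}, the inclusion $\bigcap_{L} I_L \subseteq I_{L(S)}$ follows from $L(S)\in Ar_\Delta(S)$, and the substantive work is showing $\sqrt{I_S}\subseteq I_L$ for every compatible $L$, split into $S\in D$ and $S\in D^c$. Your treatment of the case $S\in D$ and of the $4$-minors matches the paper's reasoning.

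The gap is in the $2$-minors on $\mC(S)$ when $S\in D^c$. Your collapsing argument needs $L$ to contain two \emph{distinct} lines, and the boundary cases you enumerate (a row-line missing because its row has fewer than three distinct points) are not exhaustive: compatibility in Definition~\ref{def:comb_line_arr} never prevents the two row-lines from being one and the same line, and in that situation the blocks of $L$ meeting $\mC(S)$ need not collapse. Concretely, take $k=2$, $\ell=5$, $S=\{1,4\}$ (a minimal set with $S\in D^c$), so $\mC(S)=\{5,\dots,10\}$ and $I_S$ contains the $2$-minor $[12|57]$. The arrangement $L$ with zero points $\{1,4\}$, distinct points $\{2\},\{3\},\{5,6\},\{7,8\},\{9,10\}$, and a single line through all five of them is compatible with $\Delta$ and has zero set exactly $S$; yet on a generic configuration in $Z^o_L$ the columns $5$ and $7$ are linearly independent, so $[12|57]\notin I_L$ and $\sqrt{I_S}\not\subseteq I_L$. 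No ``direct inspection of $H(S)$'' repairs this; the inclusion simply fails for such $L$. You should know that the paper's own proof makes the identical unjustified assertion (``$\mC$ is contained in a point of $L$''), so you have reproduced its gap rather than introduced a new one. What survives, and what the application in Theorem~\ref{thm:k=2_min_prime_decomp} actually requires, is the weaker claim that every $I_L$ with $L\in Ar_\Delta(S)$ contains $\sqrt{I_{S'}}$ for \emph{some} subset $S'$ (for instance $I_0\subseteq I_L$ whenever all points of $L$ are collinear); to make your argument sound you should prove that weaker statement, reserving the two-distinct-lines collapsing argument for the arrangements that genuinely carry two distinct lines.
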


Proposition~\ref{prop:I_S=Ar_D(S)} together with Theorem~\ref{thm:intersection_thm} shows that the collection of ideals $\sqrt{I_S}$ is sufficient to decompose $\sqrt{I_\Delta}$. That is $\sqrt{I_\Delta} = \bigcap_S \sqrt{I_S}$ where the intersection is taken over all subsets $S \subseteq [k\ell]$. In subsequent sections we will show that $I_S$ is radical, hence $I_S$ coincides with the ideal $I_{L(S)}$.

\begin{remark}
For a given set $S \subseteq [k\ell]$, the line arrangement $L(S)$ contains two lines intersecting at a point if and only if $S \in D^c$, $|S \cap R_1| \ge 2$ and $|S \cap R_2| \ge 2$.
\end{remark}

The following results make use of a perturbation argument. In broad terms this means, given a set $Z \subseteq \CC^n$ and a point $x \in \CC^n$ outside of $Z$, if $x$ is a limit point of $Z$, with respect to the standard norm on $\CC^n$, then $x$ lies in the Zariski closure of $Z$. This follows from the fact that the Zariski topology is coarser than Euclidean metric topology over $\CC^n$.

\begin{lemma}\label{prop:topology}
Any closed subset $U \subseteq \CC^n$ in the Zariski topology is closed in the Euclidean topology.
\end{lemma}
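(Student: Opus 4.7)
The plan is to unpack the definition of Zariski closed and then invoke continuity of polynomials with respect to the Euclidean topology.

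First, I would recall that a subset $U \subseteq \CC^n$ is Zariski closed if and only if it is the common vanishing locus of some collection of polynomials; that is, there exists an ideal $I \subseteq \CC[x_1, \dots, x_n]$ such that
\[
U = V(I) = \bigcap_{f \in I} f^{-1}(\{0\}).
\]
This rewriting is the only real content-step: after it, everything is formal.

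Next, I would observe that each polynomial $f \in \CC[x_1, \dots, x_n]$ defines a continuous map $f \colon \CC^n \to \CC$ with respect to the Euclidean topologies on source and target, because addition and multiplication in $\CC$ are continuous and $f$ is built from these operations together with the (continuous) coordinate projections. Since $\{0\} \subseteq \CC$ is Euclidean closed, the preimage $f^{-1}(\{0\})$ is Euclidean closed in $\CC^n$.

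Finally, an arbitrary intersection of Euclidean closed sets is Euclidean closed, so $U = \bigcap_{f \in I} f^{-1}(\{0\})$ is Euclidean closed. Since the argument is purely topological and uses only the continuity of polynomials, there is no genuine obstacle; the only subtlety worth flagging is that the intersection defining $V(I)$ may be infinite, which is irrelevant because closedness is preserved under arbitrary intersections.
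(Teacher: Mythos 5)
Your argument is correct and matches the paper's proof: both express a Zariski closed set as an intersection of zero sets of polynomials, note that polynomials are Euclidean-continuous, and conclude by closedness of intersections of closed sets. The only cosmetic difference is that the paper takes a finite generating set while you intersect over the whole ideal, which is harmless since arbitrary intersections of closed sets are closed.
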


\begin{proof}
Since $U$ is closed in the Zariski topology, $U = \bigcap_i f_i^{-1}(0)$ for some finite collection of polynomials $f_i$. Since polynomials are continuous in the Euclidean topology, we have $f_i^{-1}(0)$ is closed for each $i$ therefore $U$ is closed. 
\end{proof}

Before giving the proof of Theorem~\ref{thm:radical_I_S_=_I_L}, we consider the line arrangements consisting of one line. In order to apply the perturbation argument we work over $\CC$, which contrasts the rest of the paper where we work over an arbitrary algebraically closed field.

\begin{lemma}[Perturbation into a projective line] \label{lem:perturb_in_p_line}
Fix two natural numbers $n,d$ and assume that $\epsilon > 0$. Let L = $(\mP, \mL, \mI)$ be a line arrangement on $[n]$ that either has a single line containing all points or, if $|\mP| \le 2$ then, has no lines. Let $P \in \mP$ be a distinguished point. Let $A \in V(N_L)$ be a $d \times n$ matrix satisfying the polynomials $F(L)$. 
Then there exists $A' \in Z^o_L$ such that $||A - A'|| < \epsilon$. 
Furthermore, $A'$ can be chosen so that the columns of $A$ and $A'$ indexed by $P$, lie in a common $1$-dimensional linear subspace of $\CC^d$. 
\end{lemma}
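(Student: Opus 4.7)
The plan is to construct $A'$ explicitly by perturbing each column of $A$ within a carefully chosen linear subspace so that distinct points of $\mP$ correspond to distinct $1$-dimensional subspaces, while the direction of $P$ is preserved. Since $A \in V(N_L)$, the columns already satisfy the structural constraints imposed by $F(L)$: the columns indexed by $S$ vanish, the columns indexed by a single point $Q \in \mP$ span at most a $1$-dimensional subspace, and in the single-line case $\mathrm{rank}(A) \le 2$ because all $3$-minor generators of $N_L$ vanish. What prevents $A$ from lying in $Z_L^o$ is only accidental degeneracy---two distinct points of $\mP$ could have proportional or vanishing columns in $A$---and the role of the perturbation is precisely to remove these coincidences.

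For the construction, in the single-line case I would first fix a $2$-dimensional subspace $V \subset \CC^d$ containing all columns of $A$ (and in particular containing the $P$-columns); such a $V$ exists because $\mathrm{rank}(A) \le 2$, enlarging to any $2$-dimensional superspace when the rank is strictly less. In the degenerate case ($|\mP| \le 2$, no lines) the same procedure is carried out directly in $\CC^d$. Next, for each $Q \in \mP$ I would choose a $1$-dimensional subspace $L_Q \subset V$: take $L_P$ to be the span of the $P$-columns of $A$ whenever this span is non-zero (thereby enforcing the ``furthermore'' clause), and choose $L_P$ arbitrarily otherwise. For each $Q \ne P$, pick $L_Q$ near the span of $A$'s $Q$-columns (or near an arbitrary direction if those columns vanish), subject only to the finitely many open conditions $L_Q \ne L_{Q'}$; such $L_Q$ exists because these conditions exclude only finitely many points of $\mathbb{P}(V) \cong \mathbb{P}^1$. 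Finally, set $a'_j = 0$ for $j \in S$, and for $j$ in a point $Q$ write $a_j = \mu_j w_Q$ with $w_Q$ a fixed representative of the original $Q$-direction (or take $\mu_j$ to be a small non-zero scalar if $a_j = 0$), then define $a'_j = \mu_j v_Q$ where $v_Q$ is a non-zero representative of $L_Q$ chosen close to $w_Q$.

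The verification then splits cleanly into two parts. First, $A' \in V(N_L)$: the zero columns vanish, the columns at each point $Q$ are multiples of a common vector $v_Q$, every triple of non-zero columns lies in the $2$-dimensional subspace $V$ (so all $3$-minor generators of $N_L$ vanish), and $F(L)$ contains no $4$-minor generators by hypothesis. Second, $A' \in Z_L^o$: any basis $B$ of $L$ has at most two elements (three elements from distinct points would be forced to be collinear on the unique line of $L$), so $I_B$ is generated by $2$-minors, and the two columns of $A'_B$ are drawn from distinct $1$-dimensional subspaces $L_Q, L_{Q'}$ and are therefore linearly independent, giving a non-zero $2$-minor. The bound $\|A - A'\| < \epsilon$ follows by making the projective perturbations of the $L_Q$'s and the small scalars $\mu_j$ of sufficiently small size depending on $\epsilon$ and the finite number of columns. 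The main obstacle is the combinatorial bookkeeping: fixing $L_P$ to the required subspace whilst keeping every other $L_Q$ close to its original direction and pairwise distinct from the previously chosen lines, and simultaneously lifting these subspaces to column vectors close to the originals. Because the loci to avoid form a finite union of proper closed subsets of the Grassmannian, a small generic choice always succeeds.
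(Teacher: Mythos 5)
Your proof is correct and follows essentially the same strategy as the paper's: fix a $2$-dimensional ambient subspace, move coinciding points into distinct $1$-dimensional subspaces by a small perturbation, replace zero columns with small non-zero vectors, and keep the distinguished point's direction fixed. The only difference is organisational — you choose all target directions $L_Q$ simultaneously by a generic small choice, whereas the paper separates coinciding pairs one at a time inductively — and this does not affect correctness.
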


\begin{proof}
\textbf{Single point case.}
We begin with the case that $L$ is a line arrangement with a single point $\mP = \{P_1 \}$ where $P_1 = [n]$. In this case the bases $B$ of $L$ are singleton subsets of $[n]$ and so the basis ideals $I_B$ are monomial ideals. Therefore a point $A$ of $V(N_L)$ lies in $Z^o_L$ if and only if all columns are non-zero.
Let $W \subseteq \CC^d$ be a $1$-dimensional linear subspace containing all columns of $A$. 
Let $N = |\{i \in [n] : A_i = \underline 0\}|$ be the number of zero columns of $A$. Let $\underline b \in W$ be a non-zero vector such that $||b|| < \epsilon / N$, that is the norm of the vector $b$ is bounded by $\epsilon / N$. We define the matrix $A' \in \CC^{d \times n}$ as follows. The column $i^{\rm th}$ of $A'$ is $A'_i = A_i$ if $A_i$ is non-zero, otherwise $A'_i = \underline b$. We have that all columns of $A'$ are non-zero and lie in $W$ so $A' \in Z^o_L$. Also we have that:
\[
||A - A'|| \le \sum_{A_i = \underline 0} ||\underline b|| = N||\underline b|| < \epsilon
\]
Therefore, the result holds in this case.

\medskip

\textbf{General case.} We now consider the general case, so let us write $\mP = \{P_1, \dots, P_r \}$ for the points of the line arrangement and assume that $r \ge 2$. Since there are no zero points in the line arrangement we have that $\mP$ is a partition of $[n]$. In this case the bases $B \subseteq [n]$ of $L$ are precisely $B = \{ i,j\}$ such that $i, j$ are distinct points of $L$. So a matrix $A \in V(N_L)$ lies in $Z^o_L$ if and only if for any basis $B = \{i,j \}$ the vectors $A_i$ and $A_j$ are linearly independent.
Given $A \in V(N_L)$, we have that all $3$-minors of $A$ vanish. So all columns of $A$ are contained in a $2$-dimensional linear subspace $W_2 \subseteq \CC^d$. For each point $P_i \in \mP$, we have that the columns $A_j$ for $j \in P_i$ lie in a $1$-dimensional linear subspace which we will denote $W_1(i)$ for $i \in [r]$. If all linear subspaces $W_1(i)$ are distinct, then by the ``single point case'', we can construct $A'$ so that all columns are non-zero. So it suffices to show that we can perturb $A$ to a matrix $A'$ whose corresponding linear subspaces $W_1(i)$ are all distinct. Suppose, without loss of generality, that $W_1(1) = W_1(2)$. 

We will now show how to perturb the point $P_2$ away from $P_1$.
Let $\underline{b_1} \in W_1(1)$ be a unit vector. Let $M = \max\{ ||A_i|| : i \in P_2 \}$ be the length of the longest vector of a point in $P_2$. 
We take $\underline{b_2} \in W_2$ such that the following conditions are satisfied 
\begin{itemize}
    \item $\underline{b_1}, \underline{b_2}$ is a basis for $W_2$,
    \item For all $i \in [r]$, the linear subspace $W_1(i)$ does not contain $\underline{b_1} + \underline{b_2}$,
    \item $||b_2|| < \epsilon / (M |P_2|)$.
\end{itemize}
Note that it is always possible to find such a vector $\underline{b_2}$ since there are only finitely many linear subspaces $W_1(i)$. 
We now define the $d \times n$ matrix $A'$ as follows. Each column of $A'_i$ indexed by $i \in [n] \backslash P_2$ is taken to be $A_i$. For each $i \in P_2$ we have that $A_i = \lambda_i \underline{b_1}$ for some $\lambda_i \in \CC$. We define $A'_i = \lambda_i(\underline{b_1} + \underline{b_2})$. As a result we have that all columns of $A$ indexed by $P_2$ lie in the $1$-dimensional subspace spanned by the vector $\underline{b_1} + \underline{b_2}$. By construction we have that $\underline{b_1} + \underline{b_2}$ does not lie in any of $W_1(i)$ for $i \in [r]$ and so we have moved the point $P_2$ away from all other points. We also have
\[
||A - A'|| \le 
\sum_{i \in P_2} ||A_i - A'_i|| =
\sum_{i \in P_2} |\lambda_i| \cdot ||\underline{b_2}|| \le
\sum_{i \in P_2} M \cdot ||\underline{b_2} || =
|P_2| \,  M \, ||\underline{b_2}|| < \epsilon.
\]
We apply this process inductively for every pair of points that coincide. Note that after applying this argument once, the linear subspace $W_1(1)$ is fixed. So if one of the two points that coincide is the distinguished point $P$ then we may take $P = P_1$ so that the linear subspace $W_1(1)$ is fixed.
\end{proof}

We now give a proof of Theorem~\ref{thm:radical_I_S_=_I_L} which is the first step to proving the correspondence between the line arrangements $L(S)$ and the ideals $I_S$. 

\begin{proof}[{\bf Proof of Theorem~\ref{thm:radical_I_S_=_I_L}}]
To simplify notation we write $L = L(S)$ for the point and line arrangement. It is easy to see that the generators of $I_S$ vanish on each point in the configuration space $Z^o_L$. And so $I_S \subseteq I_L$. For the converse, it suffices to show that $V(I_S) \subseteq V_L$.

\smallskip

Recall from the definition that $Z^o_L \subseteq V_L$ is an open subset. Take $A \in V(I_S) \backslash Z^o_L$. We will show that $A \in \overline{Z^o_L} = V_L$. To do this, we will show that for all $\epsilon > 0$, there exists $A' \in Z^o_L$ such that $||A - A'|| < \epsilon$, where 
\[
||(a_{i,j})|| = \sum_{i,j} |a_{i,j}|, \ (a_{i,j}) \in \CC^{d \times k\ell}
\]
is the usual norm. Fix $\epsilon > 0$. We consider all three possible cases for $S$ to construct $A'$.

\medskip

\textbf{Case 1.} Assume $S = \emptyset$. Since $L$ is a line arrangement with a single line containing all points, we may apply Lemma~\ref{lem:perturb_in_p_line} which gives the desired matrix $A'$. 
\medskip

\textbf{Case 2.} Assume $S \in D \backslash \emptyset$. Since $S \in D$, the lines in $L$ do not intersect. Hence, we can write $L = L_1 \cup \dots \cup L_m$ as a disjoint union of line arrangements that either have a single point or a single line containing all points. We may apply Lemma~\ref{lem:perturb_in_p_line} to each line arrangement $L_i$ to form $A'$. 

\medskip

\textbf{Case 3.} Assume $S \in D^c$.
The generators of $I_S$ include all $4$-minors of $X$. It follows that all columns of $A$ lie in a $3$-dimensional subspace $W_0 \subseteq \CC^d$. We have that the line arrangement $L = (\mP, \mL, \mI)$ has two lines $L_1, L_2$ that intersect at a point, call it $P$. In the matrix $A$, the columns corresponding to $L_1$ lie in a $2$-dimensional subspace, which we denote by $W_1$. Similarly, the columns corresponding to $L_2$ also lie in a $2$-dimensional subspace, which we denote by $W_2$. Since $W_1, W_2 \subseteq W_0$, by dimension counting we have that $\dim (W_1 \cap W_2) \ge 1$.

Suppose that $W_1 = W_2$. We will now describe how to perturb $W_2$ away from $W_1$. By Lemma~\ref{lem:perturb_in_p_line} we may assume that the columns corresponding to $P$ are non-zero.
Let $\underline{b_1} \in \CC^d$ be a unit vector whose span corresponds to the point $P$. Let $\underline{b_2} \in \CC^d$ be a vector such that $\underline{b_1}, \underline{b_2}$ is a basis for $W_1$. 
For each $i \in \{1, 2\}$, we write $\overline{L_i} \subseteq [n]$ for the points $i \in P_j$ incident to $L_i$. For all $i \in \overline{L_2} \backslash P$, we have $A_i = \lambda_i \underline{b_1} + \mu_i \underline{b_2}$ where $\mu_i \neq 0$. Let $M = \max\{ |\mu_i| : i \in P_j, \ P_j \neq P, \ (P_j, L_2) \in \mI\}$.
Let $\underline{b_3} \in \CC^d$ be any vector in $W_0$ not contained in $W_1$ such that $||\underline{b_3}|| < \epsilon / (M |\overline{L_2}|)$. We define the matrix $A'$ as follows. We define $A'_i = A_i$ for all $i \in \overline{L_1}$ and $A'_i = A_i + \mu_i \underline{b_3}$ for all $i \in \overline{L_2} \backslash P$.
By construction we have moved the plane containing all points in $L_2$ away from the plane containing all points in $L_1$. We also have that
\[
||A - A'|| \le
\sum_{i \in \overline{L_2} \backslash P} ||A_i - A'_i|| =
\sum_{i \in \overline{L_2} \backslash P} |\mu_i|\cdot||\underline{b_3}|| \le
\sum_{i \in \overline{L_2} \backslash P} M\cdot||\underline{b_3}|| \le
|\overline{L_2}|M\cdot||\underline{b_3}||
< \epsilon.
\]
By construction, the columns $\overline{L_1}$ and $\overline{L_2}$ of $A'$ lie in two distinct $2$-dimensional planes. Also, each point of the line arrangement lies in a distinct $1$-dimensional subspace of $W_0$ and so $A' \in Z^o_L$.
\end{proof}

We have seen that the ideals $\sqrt{I_S}$ are special examples of ideals of line arrangements. We now show that they are sufficient to decompose $\sqrt{I_\Delta}$.

\begin{proof}[{\bf Proof of Proposition~\ref{prop:I_S=Ar_D(S)}}]
Note that $L(S) \in Ar_\Delta(S)$. So we will show that $\sqrt{I_S} \subseteq I_L$ for each line arrangement $L \in Ar_\Delta(S)$.
Suppose that $S \in D$. Then by definition we have that the generators of $I_S$ are contained in $F(L)$ hence $I_S \subseteq I_L$. Since $I_L$ is radical, we have that $\sqrt{I_S} \subseteq I_L$.

Suppose $S \in D^c$. It suffices to show that $V_L \subseteq V(I_S)$. Take any point in $A \in Z^o_L$. Since the generators of $I_S$ contain $F(L)$, it remains to show that all $4$-minors of $A$ vanish. We take cases on the number of lines in $L$. Since $L \in D^c$, we have  $\mC(S) \neq \emptyset$ and $\mC$ is contained in a point of $L$. 

\medskip

\textbf{Case 1.} Assume $L$ contains no lines. We show that $L$ contains at most $3$ points. Suppose by contradiction that $L$ contains $4$ points, $P_1, P_2, P_3, P_4$. Without loss of generality assume $\mC(S) \subseteq P_1$. We must have that either $R_1$ or $R_2$ intersects at least two of the points $P_2, P_3, P_4$. Suppose $R_1$ intersects $P_2$ and $P_3$. It follows that $P_1,P_2,P_3$ lie on a line corresponding to $R_1$. Similarly for the other cases, we have that $L$ contains a line, a contradiction.
Since $L$ contains $3$ points, therefore all columns of $A$ lie in a $3$-dimensional linear space of $\CC^d$. Hence all $4$-minors of $A$ vanish.

\medskip

\textbf{Case 2.} Assume $L$ has exactly one line. We show that $L$ contains at most one point which does not lie on this line. Suppose there are two points $P_1$ and $P_2$ that do not lie on the line in $L$. Without loss of generality, assume that the line contains all points that intersect $R_1$. Suppose $\mC \subseteq P_3$ is a subset of a point of $L$ that lies on the line. Therefore $P_1$ and $P_2$ do not intersect $R_1$ and so they must intersect $R_2$. By definition of line arrangement, there must exist a line passing through $P_1, P_2, P_3$, corresponding to $R_2$, a contradiction.
Therefore $L$ contains a single line and at most one point off the line. And so all columns of $A$ lie within a $3$-dimensional linear subspace of $\CC^d$ and so all $4$-minors of $A$ vanish.

\medskip 

\textbf{Case 3.} Assume $L$ contains exactly two lines. Suppose $\mC \subseteq P_1$ is contained in a point of $L$. By assumption $A \in Z^o_L$. Therefore, all columns corresponding to point $P_1$ are non-zero. All columns of $L$ contained in a line lie in a $2$-dimensional linear subspace of $\CC^d$. Each column of $A$ indexed by $P_1$ is non-zero and lies in the intersection of these two $2$ linear subspaces. Therefore all columns of $A$ are contained in a $3$-dimensional linear subspace of $\CC^d$ and so all $4$-minors of $A$ vanish. 
\end{proof}

\begin{remark}
By Theorem~\ref{thm:intersection_thm} and Proposition~\ref{prop:I_S=Ar_D(S)} we have
\[
\sqrt{\ID} = \bigcap_{L \in Ar_\Delta(S), \ S\subseteq[k\ell]} I_L =  \bigcap_{S\subseteq[k\ell]} \sqrt{I_S}.
\]
Since the line arrangement $L(S)$ contains at most $2$ lines, by Corollary~\ref{cor:irred_4_or_less_lines}, we have that $I_{L(S)}$ is prime. Therefore the above intersection can be refined to a minimal prime decomposition.
\end{remark}

We now give an example of a prime decomposition of $\sqrt{I_\Delta}$. In this example we examine the largest $\ell$ that can be currently computationally verified. Due to the symmetries of the matrix $\MY$, many different subsets $S \subseteq [k\ell]$ give rise to isomorphic ideals $I_S$. So, to simplify the notation for the following example and subsequent sections, we group together similar subsets of $[k\ell]$ as follows.

\begin{definition}
Let $Sym(\mY) \le Sym([k\ell])$ denote the permutations that are induced by permutations of the rows and columns of $\mY$. We naturally have $Sym(\mY) \cong Sym([k]) \times Sym([\ell])$. Two ideals $I_S$ and $I_{S'}$ have \emph{the same type} if $S$ and $S'$ lie in the same $Sym(\mY)$ orbit.
\end{definition}

\noindent It is straightforward to show that $I_S$ and $I_{S'}$ are isomorphic if and only if they have the same type.

\begin{example}\label{example:k2l5s2td3}
{\rm Let $k = 2, \ell = 5$, so we have 
$\mY = 
\begin{bmatrix}
1   &3  &5  &7  &9 \\
2   &4  &6  &8  &10\\
\end{bmatrix}
$ and
\[
\Delta = \Delta(2, 5) = \left\{\{1,2 \},\{3,4 \},\{5,6 \},\{7,8 \},\{9,10 \},
\binom{\{1,3,5,7,9\}}{3}, \binom{\{2,4,6,8,10\}}{3} \right\}.
\]
The prime decomposition of $I_{\Delta}$ can be verified computationally in the case $d = 3$. We record the minimal prime components in the following table, up to isomorphism.
\begin{center}
\begin{tabular}{ccc}
    \toprule
    Type of Ideal  
    & Hypergraph of Ideal
    & \begin{tabular}{c}
        Number of such ideals \\
        up to isomorphism 
    \end{tabular} \\
    \midrule
    $I_0$               & 
    $\left\{C_i \textrm{ for } i \in [5], \ \binom{[10]}{3}\right\}$ & 
    1                     \\
    $I_{1,4,6,8}$       & 
    $\left\{1,4,6,8,\{9,10 \}, \binom{\{3,5,7,9,10\}}{3}\right\}$ &
    20                    \\
    $I_{1,3,6,8,10}$    &
    $\left\{1,3,6,8,10, \{5,7,9\} \right\}$ &
    10                    \\
    \midrule
    $I_{1,4}$         & 
    $\left\{1,4 \binom{\{5,6,7,8,9,10 \}}{2} \right\}$ &
    20                    \\
    $I_{1,3,6,8}$  & 
    $\left\{1,3,6,8,\{9,10 \}, \binom{\{5,7,9,10\}}{3}, \binom{\{2,4,9,10\}}{3}\right\}$ & 
    60                    \\
    $I_{1,4,6}$    & 
    $\left\{1,4,6, \binom{\{7,8,9,10\}}{2}, \binom{\{3,5,7,8,9,10\}}{3} \right\}$ &
    60                    \\
    \bottomrule
\end{tabular}
\end{center}
}
\end{example}

\begin{example}\label{example:delta_hypergraph}
{\rm In \cite{clarke2020conditional}, the authors consider the class of hypergraph ideals $I_{H}$ where
\[
H = \Delta'(k, \ell) = 
\left\{ 
\binom{R_i}{\ell}, \binom{C_j}{2} : i \in [k], \ j \in [\ell]
\right\}.
\]
Surprisingly, the prime components of $\sqrt{I_H}$ are hypergraph ideals that contain subsets of size one and two.
Moreover, the hypergraphs associated to the minimal prime components are uniquely identified by their singletons and so are denoted by $I_S$ for $S \subset [\ell k]$. It turns out that either $S = \emptyset$ or $\Delta_S$ is disconnected and contains exactly $\ell$ connected components corresponding to the columns of the matrix $\mY$.
In the case $\ell = 3$, the hypergraphs $\Delta$ and $\Delta'$ coincide and the ideals $I_S$ agree.
However, 
we have seen in Example~\ref{example:k2l5s2td3} that for $ \ell > 3$ there are prime components of the form $I_S$ where $\Delta_S$ is connected. Theorem~\ref{thm:k=2_min_prime_decomp} shows that if $k = 2$ then each component $I_S$ is a hypergraph ideal whose hypergraph can be uniquely identified by its singletons.
}
\end{example}

\subsection{Gr\"obner basis for  \texorpdfstring{$I_S$}{IS}}\label{sec:k=2_gb}

We will now show that the canonical generating set for $I_S$ forms a Gr\"obner basis with respect to a carefully chosen term order. We will use this to show that each ideal $I_S$ is, in fact, prime. 

\smallskip

First, for each isomorphism class of $I_S$ where $S \in D^c$, we choose a standard representative. Note that this is equivalent to choosing a representative hypergraph under the action of $Sym(\mY)$.

\begin{definition}[Hypergraph Representative]
Let $i, j , c$ be non-negative integers such that $i \le j$. We define $\mC(i,c) = \{x : i + 1 \le x \le i + c \}$ which is empty if $c = 0$. We define the hypergraph
\[
F(i,j,c) := 
\left\{
\binom{\mC(i,c)}{2},
\binom{\{1, \dots, i + c \}}{3}, 
\binom{\{i+1, \dots, i + c + j \}}{3},
\binom{[i + j + c] \backslash \mC(i,r)}{4}
\right\}.
\]
\end{definition}

\begin{prop}
For each $S \in D^c$, there exist unique natural numbers $i,j,c$ such that the hypergraph $H(S) \backslash S$ is isomorphic to $F(i,j,c)$.
\end{prop}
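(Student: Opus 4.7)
The plan is to read off the triple $(i, j, c)$ directly from the combinatorial data of $S$ and then exhibit an explicit bijection of vertex sets that carries the four families of edges of $H(S) \setminus S$ onto the four families of edges of $F(i,j,c)$. Since $k = 2$, partition the columns of $\mY$ according to how they meet $S$: let $c_{\text{col}}$ be the number of columns with $C_m \cap S = \emptyset$, and for $m \in \{1,2\}$ let $r_m = |R_m \setminus S| - c_{\text{col}}$, which counts the columns whose unique $S$-entry lies in the row opposite to $R_m$. Set $c = 2 c_{\text{col}}$, and using the $R_1 \leftrightarrow R_2$ symmetry of $\mY$, assume without loss of generality that $r_1 \leq r_2$, putting $i = r_1$ and $j = r_2$.

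Next I would construct the bijection $\phi \colon [2\ell] \setminus S \longrightarrow [i + j + c]$ by sending the $i$ points of $R_1 \setminus (S \cup \mC(S))$ to $\{1, \dots, i\}$, the $c$ points of $\mC(S)$ to $\mC(i,c) = \{i+1, \dots, i+c\}$, and the $j$ points of $R_2 \setminus (S \cup \mC(S))$ to $\{i+c+1, \dots, i+c+j\}$ (each piece chosen bijectively, which is possible by the size computations above). A direct check of the three set-theoretic identities
\[
\phi(\mC(S)) = \mC(i, c), \qquad
\phi\bigl((R_1 \setminus S) \cup \mC(S)\bigr) = \{1, \dots, i+c\}, \qquad
\phi\bigl((R_2 \setminus S) \cup \mC(S)\bigr) = \{i+1, \dots, i+c+j\}
\]
forces the induced map on $r$-subsets to identify the $2$-, $3$-, and $4$-subset families of $H(S) \setminus S$ with those of $F(i,j,c)$.

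For uniqueness, I would observe that the three integers form isomorphism invariants of the hypergraph: $c$ is the size of the union of the $2$-edges; $i+c$ and $c+j$ are the sizes of the two maximal sets whose $3$-subsets appear among the edges; the total vertex count determines $i + j + c$; and the convention $i \leq j$ then pins $i$ and $j$ down uniquely.

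The main obstacle, and the one point that needs care, is the family of $4$-subsets: as literally written, $H(S) \setminus S$ contains every $4$-subset of $[2\ell] \setminus S$, whereas $F(i,j,c)$ contains only $4$-subsets avoiding $\mC(i,c)$. The key observation that reconciles this is that any $4$-subset of $[2\ell] \setminus S$ containing an element of $\mC(S)$ already contains a $2$-edge (if it has two points in $\mC(S)$) or a $3$-edge (by a pigeonhole argument: two of the remaining three points lie in the same row, and together with the $\mC(S)$-point they form a subset of $(R_m \setminus S) \cup \mC(S)$). Hence these extra $4$-subsets are subsumed by smaller edges, and after passing to minimal edge sets the two hypergraphs agree under $\phi$; the rest is routine bookkeeping.
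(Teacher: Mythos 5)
Your proof is correct and follows essentially the same route as the paper's: you extract the same parameters $i = |R_1\setminus(S\cup\mC(S))|$, $j = |R_2\setminus(S\cup\mC(S))|$, $c = |\mC(S)|$, build the same explicit bijection onto $\{1,\dots,i\}\cup\mC(i,c)\cup\{i+c+1,\dots,i+c+j\}$, and prove uniqueness via the same invariants (the clique supporting the $2$-edges and the sizes of the two maximal cliques of $3$-edges). Your closing observation that the $4$-subsets meeting $\mC(S)$ are subsumed by smaller edges is a detail the paper glosses over (its proof silently replaces the family $\binom{[k\ell]\setminus S}{4}$ from the definition of $H(S)$ with $\binom{[k\ell]\setminus(S\cup\mC(S))}{4}$), so that extra care is a genuine improvement rather than a deviation.
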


\begin{proof}
Given $S \in D^c$, let $\mC = \mC(S)$, $I = R_1 \backslash (S \cup \mC)$ and $J = R_2 \backslash (S \cup \mC)$. Then we have: 
\[
H(S) \backslash S = 
\left\{
\binom{\mC}{2},
\binom{I \cup \mC}{3},
\binom{J \cup \mC}{3},
\binom{[k\ell] \backslash (S \cup \mC(S))}{4}
\right\}.
\]
Let $i = |I|$, $j = |J|$ and $c = |\mC|$. We define an isomorphism of sets $\phi : [k\ell] \backslash S \rightarrow [i + j + c]$ such that $\phi(I) = \{1, \dots, i \}$, $\phi(\mC) = \{i+1, \dots, i+c \}$ and $\phi(J) = \{i+c+1, \dots, i+j+c\})$. 
Note that $I, J$ and $\mC$ are disjoint and so $\phi(H(S)\backslash S) = F(i,j,c)$.

To show uniqueness, suppose that $\phi: [i+j+c] \rightarrow [i'+j'+c']$ is a bijection that induces an isomorphism of hypergraphs between $F(i,j,c)$ and $F(i', j', c')$. Consider the edges of $F(i,j,c)$ of size two. These edges form a clique on $\mC(i,c)$, similarly for $F(i',j',c')$. So we must have that $\phi(\mC(i,c)) = \mC(i',c')$. Note that $|\mC| = c$ and $|\mC'| = c'$. Therefore $c = c'$.
We have that $F(i,j,c)$ contains two maximal cliques of $3$-subsets, which are on vertices $\{1, \dots, i+c \}$ and $\{ i+1, \dots, i+c+j\}$. These cliques have size $i+c$ and $j+c$ respectively. Similarly $F(i',j',c')$ has two maximal cliques of size $i' + c$ and $j' + c$. Since $\phi$ is an isomorphism it must send maximal cliques to maximal cliques. Since $i < j$ and  $i' < j'$ we deduce that $i = i'$ and $j = j'$.
\end{proof}

The difference between the ideals $I_S$ and $I_{F(i,j,c)}$ is that the latter does not contain any variable. If we show that the canonical generators of $I_{F(i,j,c)}$ form a Gr\"obner basis, with respect to some term order, then it will follow that the generators of $I_S$ form a Gr\"obner basis, with respect to any term order that extends the previous term order. This follows directly from Buchberger's algorithm since each variable in $I_S$ is relatively prime to the leading term of any other generator.
We denote $\plex$ for the lexicographic term order induced by the natural order of the variables:
\[ 
p_{u,i}>p_{v,m}
\quad\text{iff}\quad 
u<v,\text{\ or\ } u=v \ {\rm and\ } i<m.
\]

\begin{lemma}\label{lem:k=2_grobner}
For all non-negative integers $i \le j, c$ and subsets $S \in D$, the canonical minimal generating set of each ideal $I_{F(i,j,c)}$, $I_0$ and $I_S$ forms a Gr\"obner basis with respect to $\plex$.
\end{lemma}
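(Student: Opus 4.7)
The plan is to verify Buchberger's criterion for each of the three families of ideals $I_0$, $I_S$ with $S\in D$, and $I_{F(i,j,c)}$, by showing every S-pair of canonical generators reduces to zero modulo the generating set. A useful preliminary observation is that under $\plex$, for any minor $[A|B]$ with $A=\{a_1<\cdots<a_r\}$ and $B=\{b_1<\cdots<b_r\}$, the leading term is the main-diagonal monomial $\init([A|B])=x_{a_1,b_1}\cdots x_{a_r,b_r}$. This is immediate from the specification of $\plex$: within a fixed row, smaller column indices dominate, and smaller row indices dominate across rows, so the identity permutation of the row/column pairs maximises each factor of the product expanding the determinant.

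For $I_S$ with $S\in D$, I would first peel off the singleton generators $\{x_{a,s}:s\in S,\ a\in[d]\}$: their S-pairs with other generators either have coprime leading terms or reduce trivially via the singleton itself. After reducing modulo these linear generators, the residual ideal is the hypergraph ideal of $(\Delta_S)^{[3]}$ on the complementary variables. Since $\Delta_S$ is disconnected, $(\Delta_S)^{[3]}$ splits as a disjoint union of $3$-uniform cliques on the vertex sets of its connected components, enriched by the $2$-subsets $\binom{C_j}{2}$ for columns disjoint from $S$. The ideal therefore decomposes as a sum of subideals on pairwise disjoint variable blocks, so Buchberger's criterion reduces to verifying it within a single connected component $T$: here the generators are all $3$-minors of $X_T$ together with the $2$-minors on columns $C_j\subseteq T$, and the $3$-minors form a Gr\"obner basis by Sturmfels's classical theorem on $t$-minors; each remaining S-pair involving a $2$-minor rewrites via a Pl\"ucker relation to a $\KK$-linear combination of $3$-minors on $T$. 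The case of $I_0$ is analogous: the canonical generators split into the $d$-minors of $X$ (a Gr\"obner basis by Sturmfels) and the generators of $I_\Delta$, and S-pairs between a $d$-minor $[B]$ and an $I_\Delta$-generator sharing columns with $B$ are handled by Laplace expansion of $[B]$ along the overlap, which expresses the S-polynomial as a combination of $d$-minors and $I_\Delta$-generators.

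The main case is $I_{F(i,j,c)}$. Its canonical generators partition into four groups by minor size and column support: the $2$-minors on $\mC(i,c)$; the $3$-minors on $\{1,\dots,i\}\cup\mC(i,c)$; the $3$-minors on $\mC(i,c)\cup\{i+c+1,\dots,i+j+c\}$; and the $4$-minors on $\{1,\dots,i\}\cup\{i+c+1,\dots,i+j+c\}$. Within each group, the generators are the full set of $t$-minors of a submatrix with contiguous column support, so Sturmfels's theorem again supplies a Gr\"obner basis. What remains are the cross-group S-pairs, which I would analyse case-by-case via Laplace expansion along the columns shared by the two generators, arguing that the S-polynomial rewrites as a $\KK$-linear combination of canonical generators plus terms whose leading monomials are strictly smaller than the S-pair's lcm and therefore reduce to zero.

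The principal obstacle I expect is the cross-group bookkeeping in $I_{F(i,j,c)}$, most acutely for S-pairs between the $4$-minors and the $2$- or $3$-minors. Since the $4$-minors avoid $\mC(i,c)$ entirely whereas the $2$-minors lie inside $\mC(i,c)$ and the $3$-minors straddle $\mC(i,c)$, one must ensure that the Laplace expansions respect the four prescribed column supports in the canonical minimal generating set, or otherwise produce minors whose leading terms are strictly dominated by the S-pair's lcm. A systematic case analysis of the possible column-support overlaps, combined with the diagonal form of the leading terms, should close this verification and complete the proof.
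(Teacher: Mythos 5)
Your overall strategy --- Buchberger's criterion, diagonal leading terms under $\plex$, splitting the canonical generators into homogeneous groups handled by Sturmfels's classical result on $t$-minors, and then reducing the remaining cross-group S-pairs --- is exactly the route the paper takes, including the treatment of $I_S$ for $S\in D$ by peeling off the variables and decomposing into disjoint variable blocks. The gap is that the cross-group reductions are the entire substance of the proof, and you leave them at the level of ``Laplace expansion \dots should close this verification.'' The paper must produce an explicit standard representation for every such pair: a twenty-term identity for the S-pair of two $3$-minors from different cliques meeting in a single column of $\mC(i,c)$; two tables of explicit reductions for the degree $(2,3)$ pairs (and further tables for $I_0$); and --- most tellingly --- the degree $(4,3)$ pairs, which you yourself single out as the principal obstacle, resisted a clean hand argument altogether and are verified in the paper by an exhaustive computation over all possible row- and column-support overlaps (reported as roughly eleven hours of machine time). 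So asserting that a systematic case analysis ``should'' work is precisely the point at issue rather than a routine detail; without the explicit reductions, or an equivalent structural argument adapted to these mixed column supports, the proof is not complete.

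Two smaller points. First, your four-group description of $I_{F(i,j,c)}$ is not the canonical \emph{minimal} generating set: a $3$-minor whose column set meets $\mC(i,c)$ in two or more columns is redundant (its edge properly contains a $2$-edge of the hypergraph), so the minimal generating set contains only those $3$-minors meeting $\mC(i,c)$ in at most one column. The paper uses this restriction explicitly to organise its case analysis, and your appeal to ``the full set of $t$-minors of a submatrix with contiguous column support'' within each group needs the corresponding adjustment (the within-group reductions still go through, but only after noting that the omitted $3$-minors themselves reduce via the $2$-minors). Second, for $I_0$ the largest minors in the generating set are the $3$-minors of $X$ (the hidden variable is binary), not the $d$-minors; this does not change the shape of your argument but the statement as written is off for $d>3$.
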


\begin{proof}
First we consider $I_{F(i,j,c)}$. We proceed by Buchberger's criterion. Let $g_1, g_2$ be elements of the generating set for $I_{F(i,j,c)}$. By construction, the generators have degree $2,3$ or $4$ so we take cases on their degree. Additionally, we may assume that the initial terms of $g_1$ and $g_2$ are not relatively prime, otherwise it follows immediately that the $S$-polynomial reduces to zero. Let $P$ denote the smallest submatrix of $X$ which contains $g_1$ and $g_2$ as minors.

\medskip

\textbf{Case 1.} Let $\deg(g_1) = \deg(g_2) = 2$. The $2$-minors in the generating set of $I_{F(i,j,c)}$ are all $2$-minors from columns 
indexed by $\mC(i,c)$. By a classical result from  \cite{sturmfels1990grobner}, the collection of all $2$-minors of a matrix of variables forms a Gr\"obner basis for the ideal they generate with respect to $\plex$. So the $S$-polynomial $S(g_1, g_2)$ reduces to zero in the present case by the same reduction.

\medskip

\textbf{Case 2.} Let $\deg(g_1) = \deg(g_2) = 3$. Suppose that $g_1 = [c_1 c_2 c_3 \ | \ a_1 a_2 a_3]$ and $g_2 = [e_1 e_2 e_3 \ | \ b_1 b_2 b_3]$ where $1 \le c_1 < c_2 < c_3 \le d$ and $1 \le e_1 < e_2 < e_3 \le d$. By assumption $g_1$ and $g_2$ lie in a minimal generating set for $I_{F(i,j,c)}$, so we must have $|\{a_1, a_2, a_3\} \cap \mC(i,c) | \le 1$ and $|\{b_1,b_2,b_3\} \cap \mC(i,c)| \le 1$. Hence, without loss of generality, there are two cases:
\begin{itemize}
    \item $a_1, a_2, a_3, b_1, b_2, b_3$ all belong to $\{1, \dots, i+c \}$ or $\{i+1, \dots, i+j+c \}$,
    \item $a_1, a_2, a_3$ belongs to $\{1, \dots, i+c \}$ and $b_1,b_2,b_3$ belongs to $\{ i+1, \dots, i+j+c\}$. 
\end{itemize}

\smallskip

\textbf{Case 2.i.} Assume that $a_1, a_2, a_3, b_1, b_2, b_3$ all belong to $\{1, \dots, i+c \}$ or $\{ i+1, \dots, i+j+c \}$.
Note that $I_{F(i,j,c)}$ contains all  $3$-minors on columns $\{1, \dots, i+c \}$ and $\{i+1, \dots, i+j+c \}$. By a classical result, the collection of all $3$-minors form a Gr\"obner basis for the ideal they generate. Therefore the $S$-polynomial $S(g_1, g_2)$ reduces to zero by the same reduction as the classical case. 

\smallskip

\textbf{Case 2.ii.} Assume that $\{a_1, a_2, a_3 \} \subseteq \{1,\dots,i+c\}$ and $\{b_1,b_2,b_3\} \subseteq \{i+1,\dots,i+j+c\}$. Since the leading terms of $g_1$ and $g_2$ are not relatively prime, it follows that $a_3 = b_1 \in \mC(i,c)$ and $c_3 = e_1$. It follow that $P$ is a $5 \times 5$ matrix, and we may relabel its rows so that $g_1 = [123 | 123]$ and $g_2 = [345| 345]$. Let $J$ be the ideal generated by all the $3$-minors of $P$ with columns $\{1,2,3\}$ and $\{3,4,5\}$. We will show that the $S$-polynomial $S(g_1, g_2)$ reduces to zero inside $J$ with respect to $\plex$. By the division algorithm we find,
\begin{align*}
    S(g_1,g_2) &= x_{4,4}x_{5,5}g_1 - x_{1,1}x_{2,2}g_2 \\
    &= x_{4,5}x_{5,4}[123|123] + [35|45][124|123] - [25|45][134|123] + [15|45][234|123] \\
    &- [34|45][125|123] + [24|45][135|123] - [14|45][235|123] - [23|45][145|123] \\
    &+ [13|45][245|123] - [12|45][345|123] + [45|12][123|345] - [35|12][124|345] \\
    &+ [25|12][134|345] - [15|12][234|345] + [34|12][125|345] - [24|12][135|345] \\
    &+ [14|12][235|345] + [23|12][145|345] - [13|12][245|345] - x_{1,2}x_{2,1}[345|345].
\end{align*}

This shows that $S(g_1,g_2)$ reduces to zero in $J$. Since the same generators appear in $I_{F(i,j,c)}$, the $S$-polynomial also reduces to zero by the same reduction.

\medskip

\textbf{Case 3.} Let $\deg(g_1) = \deg(g_2) = 4$. Since the ideal $I_{F(i,j,c)}$ contains all $4$-minors of $X$, we have that the $S$-polynomial of $g_1$ and $g_2$ reduces to zero by a classical result.

\medskip

\textbf{Case 4.} Let $\deg(g_1) = 2 $ and $\deg(g_2) = 3$. We may assume that, of the columns defining $g_2$, at most one is taken from $\mC(i,c)$. Therefore, in $P$, the columns defining $g_1$ are adjacent.
We have that either the submatrix $P$ has $3$ rows or at least $4$ rows.

\textbf{Case 4.i.} Assume that $P$ has $3$ rows. We write
$
P = \begin{bmatrix}
x_1 & x_2 & x_3 & x_4 \\
y_1 & y_2 & y_3 & y_4 \\
z_1 & z_2 & z_3 & z_4 \\
\end{bmatrix}.
$
Note that there are two cases for $g_2 = [a_1a_2a_3]$, either $a_1 \in \mC(i,c)$ or $a_3 \in \mC(i,c)$. By examining the possible cases for $g_1$, whose defining columns must be adjacent, we write down all possible cases for $g_1$ and $g_2$ and show that the $S$-polynomial reduces to zero.

\begin{center}
\begin{tabular}{ccc}
\toprule
$g_1$ & $g_2$ & Reduction of $S(g_1,g_2)$\\
\midrule 
$[12|12]$ & $[134]$ & $- y_1 [234] + y_4z_3 [12|12] + x_3y_4 [23|12] - x_4y_3 [23|12]$\\
$[13|12]$ & $[134]$ & $-z_1[234] + y_4z_3 [12|12]+ x_3z_4 [23|12] - x_4y_3 [23|12]$\\
\midrule
$[13|34]$ & $[124]$ & $- x_4 [123] + x_1z_2 [12|34] + x_2y_1 [13|34] - x_2z_1 [12|34]$ \\
$[23|34]$ & $[124]$ & $-y_4[123] - x_2y_1 [23|34] + y_1z_2 [12|34] - y_2z_1 [12|34]$ \\
\bottomrule
\end{tabular}
\end{center}
The first two cases are for $a_1 \in \mC(i,c)$ and the last two cases are for $a_3 \in \mC(i,c)$.

\smallskip
\textbf{Case 4.ii.} Assume that $P$ contains at least $4$ rows. Since the leading terms of $g_1$ and $g_2$ are not relatively prime, we have that $P$ contains exactly $4$ rows. Let us write, $g_2 = [b_1 b_2 b_3 \ | \ a_1  a_2  a_3]$ where either $a_1 \in \mC(i,c)$ or $a_3 \in \mC(i,c)$. If $a_3 \in \mC(i,c)$ then either $g_1 = [b' b_3 | a' a_3]$ or $g_1 = [b_3  b'' | a_3 a'']$ where $b' < b_3 < b''$ and $a' < a_2 < a''$. If $a_1 \in \mC(i,c)$ then we have that either $g_1 = [ b'  b_1 |  a'  a_1]$ or $g_1 = [b_1  b'' | a_1  a'']$ where $b' < b_1 < b''$ and $a' < a_1 < a''$. For each case we write down the reduction of the $S$-polynomial after a suitable relabelling of the columns of $P$, which preserves their order.

\begin{center}
    \resizebox{\textwidth}{!}{
    \begin{tabular}{ccl}
    \toprule
        $g_1$ & $g_2$ & \multicolumn{1}{c}{Reduction of $S(g_1, g_2)$} \\
    \midrule 
        $[14|34]$ & $[234|124]$ & 
        $-x_{1,4} [234|123]
        +[24|12][13|34]
        -[34|12][12|34]
        -x_{2,2} x_{3,1} [14|34]$
        \\ 
        $[24|34]$ & $[134|124]$ &
        $-x_{2,4}[134|123]
        +[14|12] [23|34]
        +[34|12] [12|34]
        +x_{1,2} x_{3,1} [24|34]$
        \\ 
        $[34|34]$ & $[124|124]$ & 
        $-x_{3,4} [124|123]
        -[14|12] [23|34]
        +[24|12] [13|34]
        +x_{1,2} x_{2,1} [34|34]$
        \\
        $[34|34]$ & $[123|123]$ & 
        $-x_{3,4}[124|123]
        +x_{2,3}[134|123]
        -x_{1,3}[234|124]
        +[34|12][12|34]
        +x_{1,2}x_{2,1}[34|34]$
        \\ 
    \midrule
        $[12|12]$ & $[134|134]$ & 
        $-x_{1,2}[234|134]
        -x_{3,1}[124|234]
        +x_{4,1}[123|234]
        +[12|34][34|12]
        +x_{3,4}x_{4,3}[12|12]$
        \\ 
        $[13|12]$ & $[124|134]$ & 
        $-x_{3,1}[124|234]
        +[14|34] [23|12]
        +[12|34] [34|12]
        +x_{2,4}x_{4,3}[13|12]$
        \\ 
        $[14|12]$ & $[123|134]$ & 
        $-x_{4,1} [123|234]
        +[13|34] [24|12]
        -[12|34] [34|12]
        +x_{2,4}x_{3,3}[14|12]$
        \\
        $[12|12]$ & $[234|234]$ & 
        $-x_{1,2}[234|134]
        +[24|34][13|12]
        -[23|34][14|12]
        +x_{3,4}x_{4,3}[12|12]$
        \\
    \bottomrule
    \end{tabular}
    }
\end{center}

\medskip

\textbf{Case 5.} Let $\deg(g_1) = 2$ and $\deg(g_2) = 4$. Note that the $4$-minors in the generating set are defined on columns $[k\ell] \backslash (S \cup \mC(i,c))$, whereas the $2$-minors on columns $\mC(i,c)$. In particular, $g_1$ and $g_2$ are defined on disjoint sets of variables so their initial terms are relatively prime.

\medskip

\textbf{Case 6.} Let $\deg(g_1) = 4$ and $\deg(g_2) = 3$.
Let us write $g_1 = [R_1 | C_1]$ and $g_2 = [R_2 | C_2]$. By assumption the leading terms of $g_1$ and $g_2$ are not relatively prime, thus $R_1 \cap R_2 \neq \emptyset$ and $C_1 \cap C_2 \neq \emptyset$. Hence, after a relabelling of the rows and columns that preserves their relative order, we may assume that $R_1, C_1, R_2, C_2$ are subsets of $\{1, \dots, 6 \}$. Let $G$ be the collection of:
\begin{itemize}
    \item All $4$-minors of $P$,
    \item All $3$-minors of $P$ on columns $\conv(C_2) := \{\min(C_2), \min(C_2)+1, \dots, \max(C_2)\}$.
\end{itemize}
By definition we have that the ideal $I_S$ contains all $4$-minors of $P$. Also we have that either $C_2 \subseteq \{1, \dots, i+c \}$ or $C_2 \subseteq \{i+1, \dots, i+c+j \}$. Therefore $ \conv(C_2) \subseteq \{1, \dots, i+c \} $ or $ \conv(C_2) \subseteq \{i+1, \dots, i+c+j \}$. Hence, all $3$-subsets of $\conv(C_2)$ lie in  $F(i,j,c)$ and so all $3$-minors of $P$ on columns $\conv(C_2)$ are contained in $I_S$.

We computationally verify that, for each possible $R_1, C_1, R_2, C_2$, the polynomials in $G$ are sufficient to reduce the $S$-polynomial $S(g_1,g_2)$ to zero. The code can be found on GitHub:
\begin{center}
\url{https://github.com/ollieclarke8787/hypergraph_ideals}.\footnote{The code took 11 hours to terminate on a laptop with an Intel i5-8350U CPU @ 1.70GHz with 8GB of memory.}
\end{center}
This completes Buchberger's Criterion for $I_{F(i,j,c)}$. 

\medskip

To show that the generating set for $I_0$ forms a Gr\"obner basis, take two generators $g_1$ and $g_2$ and take cases on their degree. If they have the same degree then the $S$-polynomial reduces to zero by a classical result. On the other hand, if their degrees are different then we may assume $\deg(g_1) = 2$ and $\deg(g_2) = 3$. We may also assume that their initial terms are not relatively prime. The minors can be seen to be taken from a submatrix of $X$ with $4$ rows and columns which we will label $1,2,3,4$, such that the order is compatible with $X$. By the construction of $I_0$, we deduce that $g_1$ is a $2$-minor on adjacent columns of $X$. If $g_1$ is contained in either columns $1,2$ or $3,4$ then by case $4$ of the above argument, the $S$-polynomial reduces to zero. The remaining cases are when the $2$-minor is taken from columns $2,3$, as in the table below. This completes the proof for $I_0$.

\begin{center}
\resizebox{\textwidth}{!}{
\begin{tabular}{ccl}
\toprule
$g_1$ & $g_2$ & 
\multicolumn{1}{c}{Reduction of $S(g_1, g_2)$} \\
\midrule
$[23|23]$ & $[123|124]$ & 
$-x_{3,2}[123|134] 
+[23|14][13|23]
+x_{1,4}x_{3,1}[23|23]$
\\
$[12|23]$ & $[123|134]$ & 
$-x_{1,3}[123|124] 
+[12|14][13|23]
+x_{1,4}x_{3,1}[12|23]$
\\
\midrule
$[23|23]$ & $[134|134]$ & 
$-x_{2,3}[134|124]
-x_{4,2}[123|134]
+x_{4,3}[123|124]
$
\\
& & \multicolumn{1}{r}{
$+[12|14][34|23]
+[23|14][14|23]
-[34|14][12|23]
+x_{1,4}x_{4,1}[23|23]$}
\\
\midrule
$[13|23]$ & $[234|134]$ & 
$-x_{1,3}[234|124]
+[23|14][14|23]
+[34|14][12|23]
+x_{2,4}x_{4,1}[13|23]$
\\
$[23|23]$ & $[124|124]$ & 
$-x_{2,3}[134|124]
-x_{4,2}[123|134]
+x_{1,2}[234|134]
+[23|14][14|23]
+x_{1,4}x_{4,1}[23|23]$
\\
$[24|23]$ & $[123|124]$ &
$-x_{4,2}[123|134]
+[12|14][34|23]
+[23|14][14|23]
+x_{1,4}x_{3,1}[24|23]$
\\
\bottomrule
\end{tabular}
}
\end{center}

Finally to show that the generating set for $I_S$ forms a Gr\"obner basis for $S \in D$, we note that $\Delta_S$ has at least two connected components and so $I_S$ is the sum of the hypergraph ideals associated to each component. For each connected component of $\Delta_S$, the corresponding ideal has a generating set which forms a Gr\"obner basis by the same $S$-polynomial reductions as $I_0$. Since the generating sets of the ideals of each of the components are on disjoint sets of variables, it follows that the generating set for $I_S$ forms a Gr\"obner basis. This completes the proof.
\end{proof}

Using the Gr\"obner basis for the ideals $I_S$ immediately shows that the ideals are radical.

\begin{prop}\label{prop:I_S_radical}
For all $S \subseteq [k\ell]$ the ideal $I_S$ is radical.
\end{prop}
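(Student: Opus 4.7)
The plan is to deduce radicality of each $I_S$ directly from the Gr\"obner basis established in Lemma~\ref{lem:k=2_grobner}, by invoking the classical criterion that if $\init(I)$ is a squarefree monomial ideal, then $I$ is radical (see e.g.~\cite{herzog2011monomial,Sturmfels02:Solving_polynomial_equations}). The task therefore reduces to verifying that, with respect to $\plex$, the leading monomials of the canonical generators of each $I_S$ are squarefree.

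First I would reduce to the three cases treated in Lemma~\ref{lem:k=2_grobner}. For $S = \emptyset$ and $S \in D \setminus \{\emptyset\}$ the lemma applies directly. For $S \in D^c$, the ideal $I_S$ is obtained from $I_{F(i,j,c)}$ by relabelling variables and then adjoining the singletons $\{x_{u,j} : u \in [d],\ j \in S\}$; since these adjoined variables are disjoint from those appearing in $I_{F(i,j,c)}$, appending them to the Gr\"obner basis of $I_{F(i,j,c)}$ yields a Gr\"obner basis of $I_S$ by a trivial case of Buchberger's criterion (all new pairs of leading terms are coprime).

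Next I would inspect leading monomials under $\plex$. Each canonical generator is either a single variable $x_{u,j}$, which is trivially squarefree, or a minor $[A|B]$ with $A = \{a_1 < \cdots < a_r\}$ and $B = \{b_1 < \cdots < b_r\}$. Since $\plex$ prefers smaller row index and breaks ties by smaller column index, the leading monomial of $[A|B]$ is the diagonal product $\prod_{i=1}^r x_{a_i, b_i}$, which is squarefree. Hence $\init(I_S)$ is a squarefree monomial ideal and $I_S$ is therefore radical.

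The only non-trivial ingredient, namely verifying that the canonical generators actually form a Gr\"obner basis, was already overcome in Lemma~\ref{lem:k=2_grobner}; granted that lemma, the present proposition follows from the squarefree-initial-ideal criterion with no additional case analysis. The main obstacle, should one arise, would be a subtle mismatch between the variable set of $I_S$ and that of its standard representative $I_{F(i,j,c)}$ for $S \in D^c$, but this is harmless because the extra variables sit in a disjoint polynomial subring.
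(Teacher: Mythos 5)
Your proposal is correct and follows essentially the same route as the paper: both deduce radicality from Lemma~\ref{lem:k=2_grobner} via the squarefree-initial-ideal criterion, the leading terms being diagonal products of minors (or single variables) under $\plex$. Your write-up is merely a more explicit version of the paper's terse argument, including the (already handled in the lemma's proof) reduction from $I_{F(i,j,c)}$ to $I_S$ for $S \in D^c$.
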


\begin{proof}
By Lemma~\ref{lem:k=2_grobner}, the generating set of $I_S$ for all subsets $S \subseteq [k\ell]$ is a Gr\"obner basis with respect to some term order. The generators are all minors of the matrix of variables so, in particular, they are square-free. And so it follows that $I_S$ is radical.
\end{proof}

\subsection{Minimal components}\label{sec:k=2_min_comp}
We now determine the minimal primes in the decomposition of $\sqrt{I_{\Delta }}$ and prove Theorem~\ref{thm:k=2_min_prime_decomp}.

\medskip

\noindent\textbf{Notation.} For the case $k = 2$, we recall that the matrix $\mY$ has columns $C_1 = \{1,2 \}, C_2 = \{3,4 \}, \dots, C_\ell = \{ 2\ell-1, 2\ell\}$. 
Let $S \subseteq [k \ell]$, we write $Col(S) \subseteq [\ell]$ for the columns that meet $S$. Formally $Col(S) = \{ j \in [\ell] : C_j \cap S \neq \emptyset \}$.

\begin{definition}\label{def:S_minimal}
We call $S$ \emph{minimal} if it is either empty or all the following three conditions hold.
\vspace{-0.3cm}
\begin{enumerate}
    \item[(1)] $Col(R_1  \backslash S) \not\subseteq Col(R_2\backslash S)$ and $Col(R_2 \backslash S) \not\subseteq Col(R_1 \backslash S)$.
    
    \item[(2)] For all $i \in [\ell]$, $C_i \not\subseteq S$.
    
    \item[(3)] $|R_1 \backslash S| \ge 2$ and $|R_2 \backslash S| \ge 2$.
\end{enumerate}
\end{definition}

We show that a subset $S \subseteq [k\ell]$ is not minimal if and only if the ideal $I_S$ is redundant in the prime decomposition of $\sqrt{I_S}$. More precisely, we prove that:

\begin{lemma}\label{lem:minimal_primes}
There exists $S' \subseteq [k \ell]$ such that $I_{S'} \subsetneq I_S$ if and only if $S$ is not minimal. 
\end{lemma}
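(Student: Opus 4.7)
The plan is to prove both directions by case analysis on which of conditions $(1)$--$(3)$ fails, starting from the observation that $I_{S'}\subseteq I_S$ forces $S'\subseteq S$: the degree-one part of each hypergraph ideal is the linear span of the variables indexed by its singleton edges, while every other generator is a minor of degree at least two, so strictness $I_{S'}\subsetneq I_S$ is equivalent to $S'\subsetneq S$ (witnessed by the variables $x_{i,s}$ for any $s\in S\setminus S'$). For the forward direction I exhibit an explicit $S'$ in each case. If $C_i\subseteq S$ (violating $(2)$), then $S'=S\setminus C_i$ works: every minor generator of $I_{S'}$ either touches a column in $C_i\subseteq S$ and is absorbed by the singletons $x_{j,c}$, $c\in C_i$, of $I_S$, or is itself a generator of $I_S$. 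If $(2)$ holds and $(1)$ fails, say $Col(R_2\setminus S)\subseteq Col(R_1\setminus S)$, condition $(2)$ forces $S\cap R_1=\emptyset$, whence $(R_1\setminus S)\cup\mC(S)=[k\ell]\setminus S$ and every $3$-minor of $X$ and every column $2$-minor is a generator of $I_S$, so $S'=\emptyset$ gives $I_0\subseteq I_S$. Finally, if $(3)$ fails while $(1)$ and $(2)$ hold, then (WLOG $|R_1\setminus S|=1$) conditions $(1)$, $(2)$ force $S$ to have the form $(R_1\setminus\{2j_0-1\})\cup\{2j_0\}$; for any $a\in R_1\cap S$ the choice $S'=S\setminus\{a\}$ works, since the only new generators of $I_{S'}$ are minors involving column $a$ (absorbed by $x_{i,a}\in I_S$) or $3$-minors on subsets of $R_2\setminus S$, which are already generators of $I_S$.

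For the backward direction, suppose $S$ is minimal and $S'\subsetneq S$ with $I_{S'}\subseteq I_S$. Pick $s\in S\setminus S'$, WLOG $s\in R_1$, and let $t$ be its row-$2$ partner in the column $C_{j_s}$. Condition $(2)$ gives $t\notin S\supseteq S'$, so $\{s,t\}\subseteq\mC(S')$ and $\{s,t\}$ is a column edge of $\Delta_{S'}$. By $(1)$ the row-$1$ exclusive set $R_1\setminus S\setminus\mC(S)$ is non-empty, so I choose $x$ from it; by $(3)$ I choose $y\in(R_1\setminus S)\setminus\{x\}$, and set $B=\{t,x,y\}$, a $3$-subset disjoint from $S$. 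Since $|R_1\setminus S'|\ge 3$ (it contains $s$, $x$, $y$), the $3$-subset $\{s,x,y\}$ is a row-$1$ edge of $\Delta_{S'}$, linking $s$ to $x$ and $y$; together with $\{s,t\}$ this places $s,t,x,y$ in one connected component of $\Delta_{S'}$. Consequently $B$ is a $3$-subset generator of $I_{S'}$: in the case $S'\in D$ via $(\Delta_{S'})^{[3]}$, and in the case $S'\in D^c$ via $B\subseteq(R_1\setminus S')\cup\mC(S')$ (since $t\in\mC(S')$ and $x,y\in R_1\setminus S\subseteq R_1\setminus S'$).

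To complete the contradiction I need to show $[B]\notin I_S$; by Theorem~\ref{thm:radical_I_S_=_I_L} this reduces to exhibiting a configuration in $V_{L(S)}$ on which $[B]$ does not vanish. In $L(S)$, the point $x$ is a row-$1$ singleton distinct from the intersection point $\mC(S)$ (this is precisely the row-$1$ exclusive condition), and $t$ is a row-$2$ singleton distinct from $\mC(S)$ since $t$'s column contains $s\in S$; in particular $t$ does not lie on the row-$1$ line of $L(S)$, so a generic configuration places the columns indexed by $t$, $x$, $y$ in linearly general position and $[B]$ does not vanish. In degenerate subcases where a row line of $L(S)$ fails to be realised because it would contain fewer than three points, the corresponding columns are unconstrained and the conclusion follows from generic linear independence of $t,x,y$. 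The main subtlety is precisely this final non-vanishing argument: it requires careful checking in the several degenerate configurations of $L(S)$, but in each case reduces to an elementary observation that no line in the arrangement forces $t$, $x$, $y$ to be collinear.
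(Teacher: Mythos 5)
Your overall strategy mirrors the paper's: reduce to $S'\subseteq S$ via the degree-one parts, split the forward direction according to which of conditions (1)--(3) fails, and produce an explicit witness minor for the converse. Your converse is in fact argued more carefully than the paper's (you handle an arbitrary $S'\subsetneq S$ rather than only minimal ones, and you justify $[A|B]\notin I_S$ by producing a configuration of $L(S)$ on which the minor is nonzero, which is sound since $\{t,x,y\}$ meets the independence conditions of $L(S)$). The cases where (1) fails and where (3) fails are also essentially correct.

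The genuine gap is in the case where (2) fails. The claim that, for $S'=S\setminus C_i$, every minor generator of $I_{S'}$ avoiding $C_i$ is a generator of $I_S$ is false: deleting the whole column from $S$ can reconnect the induced hypergraph, i.e.\ $\Delta_S$ may be disconnected (so $S\in D$ and $H(S)$ only contains minors supported inside single connected components of $\Delta_S$) while $\Delta_{S'}$ is connected, so that $H(S')$ acquires the full clique $\binom{\mC(S')}{2}$ with $\mC(S')=\mC(S)\cup C_i$ together with $3$- and $4$-minors across the row unions, none of which need lie in $I_S$. Concretely, take $k=2$, $\ell=4$, $S=\{1,2,3,4\}$, $C_i=C_1$, so $S'=\{3,4\}$. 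Here $S\in D$ and $I_S$ is generated by the variables in columns $1,\dots,4$ together with the $2$-minors $[AB|56]$ and $[AB|78]$; but $S'\in D^c$ with $\mC(S')=\{1,2,5,6,7,8\}$, so $[12|57]$ is a generator of $I_{S'}$, and it does not vanish at the point of $V(I_S)$ whose columns $1,\dots,4$ are zero and whose columns $5,6$ equal $e_1$ and $7,8$ equal $e_2$. Hence $[12|57]\notin\sqrt{I_S}$ and $I_{S'}\not\subseteq I_S$. (For this $S$ a correct witness is $S'=\emptyset$: every $3$-subset of $\{5,6,7,8\}$ contains a full column, so all $3$-minors lie in $I_S$ and $I_0\subsetneq I_S$.) Note that the paper removes only a single entry, $S'=S\setminus\{2i\}$, which at least keeps the edge $C_i$ out of $\Delta_{S'}$; removing all of $C_i$ makes the reconnection strictly more likely. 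To repair this case you must track how the connected components of $\Delta_{S'}$ compare with those of $\Delta_S$, and switch to a different witness (such as $S'=\emptyset$, or a removal that preserves disconnectedness) whenever components merge.
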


\begin{proof}
Suppose $S$ is not minimal. We will construct a subset $S'$ such that $I_{S'} \subsetneq I_S$.  We denote by $\mC$ the union of all columns $C_i$ such that $C_i \cap S = \emptyset$. Note that the $2$-minors in $I_S$ are exactly all $2$-minors from columns $\mC$ of the variable matrix $X$.

Suppose that (2) does not hold for $S$. Then there exists $i \in [\ell]$ such that $C_i = \{2i-1, 2i \} \subseteq S$. In this case, we define $S' = S \backslash 2i$. Consider the generating sets of $I_{S'}$ and $I_S$. Since $2i \not\in I_{S'}$ we have that $I_{S'}$ does not contain any variable from column $2i$ of the variable matrix $X$. Therefore $I_{S'} \neq I_S$. Now take any generator $g$ of $I_{S'}$ which does not lie in the canonical generating set of $I_S$. We see that $g$ is a minor of $X$ containing column $2i$. However $I_S$ contains all variables from column $2i$ and so $I_S$ contains $g$. Therefore $I_{S'} \subsetneq I_S$.

Suppose that (2) holds and (1) does not hold. 
We may assume that $Col(R_1 \backslash S) \subseteq Col(R_2 \backslash S)$. Since (2) holds, it follows that $S \subseteq R_1$. In this case we will show that $I_0 \subsetneq I_S$. Note that $S$ is non-empty and so $I_S$ contains variables, however $I_0$ does not contain any variables and so $I_S \neq I_0$. It suffices to show that all $3$-minors of $X$ are contained in $I_S$. Let $a,b,c$ be columns of $X$.

\begin{itemize}
    \item If $a \in S$ then $[abc] \in I_S$ because $I_S$ contains all variables in column $a$ of $X$.
    
    \item Assume that none of $a,b,c$ lies in $S$. If $a,b$ are contained in $\mC$ then $[abc] \in I_S$ because $I_S$ contains all $2$-minors on columns $a,b$ of $X$, which generate the minor. 
    
    \item Assume that none of $a,b,c$ lies in $S$ and no two of $a,b,c$ lie in $\mC$. Note that $R_1 \subseteq S \cup \mC$, so w.l.o.g. assume that $a,b \in R_2 \backslash \mC$. If $c \in R_2$ then by definition $[abc] \in I_{ \Delta} \subset I_S$. On the other hand, if $c \in (R_1 \backslash S) = (R_2 \cap \mC)$ then by definition of $I_S$ we have that $[abc] \in I_S$.
\end{itemize}

Finally, suppose that (1) and (2) hold but (3) does not hold for $S$. Without loss of generality, assume that $|R_1 \backslash S| \le 1$. By (1) we have that $R_1 \backslash S \not\subseteq R_2 \backslash S$. Therefore $|R_1 \backslash S| = 1$ and $|R_2 \cap S| = 1$. Furthermore, by (1), (2) and permutation of the columns of $\mY$ we may assume that $S = \{1,3,\dots, 2\ell-3, 2\ell \}$. In this case let $S' = \{1,3,\dots,2\ell-5,2\ell\}$. Note that $I_{S'}$ does not contain any variable from column $2\ell-3$ of $X$ so $I_{S'} \neq I_S$. Let us take a generator $g$ of $I_{S'}$ which is not a canonical generator of $I_S$. Then $g$ is a minor of $X$, which contains the column $2\ell-3$. However $I_S$ contains all variables from column $2\ell-3$, which generate the minor. And so $g \in I_S$. And so we have shown that when $S$ is not minimal there exists $S'$ such that $I_{S'} \subsetneq I_S$.

\medskip

For the converse, suppose that $S_1$ and $S_2$ are minimal and $S_1 \subsetneq S_2$. Let $\mC_1$ be the union of all columns $C_i$ with $C_i \cap S_1 = \emptyset$ and similarly define $\mC_2$ for $S_2$. We will show that $I_{S_1} \not\subseteq I_{S_2}$. Assume, without loss of generality, that $2i - 1 \in S_2 \backslash S_1$ for some $i \in [\ell]$. Let $a = 2i \in R_2$. By condition (2) from the definition of minimal subsets, we have $C_i \not\subseteq S_2$. Therefore $a \not\in S_2$ and so $a \not\in S_1$. So we have that $C_i \cap S_1 = \emptyset$ and so $C_i \subseteq \mC_1$. There are now two cases, either $\mC_2 = \emptyset$ or $\mC_2 \neq \emptyset$.

\textbf{Case 1.} Assume that $\mC_2 = \emptyset$. By condition (3) from the definition of minimal subsets, we have $|R_1 \backslash S_2| \ge 2$. Let $b, c \in R_1 \backslash S_2$ be distinct elements. Since $\mC_2 = \emptyset$, therefore $b+1, c+1 \in S_2$. Hence $I_{S_2}$ does not contain $[abc]$. However by construction of $I_S$, we have that $[abc] \in I_{S_1}$.

\textbf{Case 2.} Assume that $\mC_2 \neq \emptyset$. Let $b \in \mC_2 \cap R_2$. Then by definition, the $2$-minors on columns $a,b$ of $X$ are contained in $I_{S_1}$. However these $2$-minors are not contained in $I_{S_2}$ because $a \not\in \mC_2$.

And so in each case we have shown that $I_{S_1} \not\subseteq I_{S_2}$ which completes the proof.
\end{proof}

We now ready to state and prove our main result as follows.

\begin{theorem}\label{thm:k=2_min_prime_decomp}
Let $k = 2$. Then the minimal prime decomposition for $\sqrt{I_\Delta}$ is
$
\sqrt{I_\Delta} = \bigcap I_S
$
where the intersection is taken over all minimal subsets $S \subseteq [k\ell]$.
\end{theorem}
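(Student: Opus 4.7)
The plan is to assemble the theorem directly from the results established earlier in this section. First, combining Theorem~\ref{thm:intersection_thm} with Proposition~\ref{prop:I_S=Ar_D(S)} gives
\[
\sqrt{I_\Delta} \;=\; \bigcap_{\substack{L \in Ar_\Delta(S)\\ S \subseteq [k\ell]}} I_L \;=\; \bigcap_{S \subseteq [k\ell]} \sqrt{I_S}.
\]
Since each $I_S$ is radical by Proposition~\ref{prop:I_S_radical}, the right-hand side simplifies to $\bigcap_{S \subseteq [k\ell]} I_S$. This already expresses $\sqrt{I_\Delta}$ as an intersection of the ideals $I_S$ indexed by all subsets $S \subseteq [k\ell]$.

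Next, I would show that each $I_S$ in this intersection is prime. By Theorem~\ref{thm:radical_I_S_=_I_L}, $I_S = \sqrt{I_S} = I_{L(S)}$, and the line arrangement $L(S)$ has at most two lines by construction in Definition~\ref{def:line_arr_L(S)}. Corollary~\ref{cor:irred_4_or_less_lines} then guarantees that $V_{L(S)}$ is irreducible, so $I_S = I_{L(S)}$ is prime. Therefore $\sqrt{I_\Delta} = \bigcap_{S \subseteq [k\ell]} I_S$ is a prime decomposition.

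To obtain the \emph{minimal} prime decomposition, I would use Lemma~\ref{lem:minimal_primes}. One direction says that if $S$ is not minimal then there exists $S' \subseteq [k\ell]$ with $I_{S'} \subsetneq I_S$, so the component $I_S$ is redundant in the intersection and may be discarded; iterating this removal leaves only minimal $S$. The converse direction of Lemma~\ref{lem:minimal_primes} ensures that if $S_1, S_2$ are both minimal with $S_1 \subsetneq S_2$ then $I_{S_1} \not\subseteq I_{S_2}$, so among the minimal subsets no proper containment $I_{S_2} \subsetneq I_{S_1}$ can occur between distinct components (if $S_1, S_2$ are incomparable as sets a direct generator comparison, of the flavour used in Lemma~\ref{lem:minimal_primes}, also produces a distinguishing minor). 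Hence
\[
\sqrt{I_\Delta} \;=\; \bigcap_{S \text{ minimal}} I_S
\]
is the minimal prime decomposition.

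The main conceptual obstacle has already been absorbed by the preceding results: the radicality of $I_S$ (Proposition~\ref{prop:I_S_radical}, proved via the Gröbner basis of Lemma~\ref{lem:k=2_grobner}), the identification $I_S = I_{L(S)}$ (Theorem~\ref{thm:radical_I_S_=_I_L}, proved via perturbation), and the primeness of $I_{L(S)}$ (Corollary~\ref{cor:irred_4_or_less_lines}). The only piece of bookkeeping left is confirming that minimality pins down irredundancy in both directions, which is exactly the content of Lemma~\ref{lem:minimal_primes}. So the proof amounts to citing these four ingredients in the order above.
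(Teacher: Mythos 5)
Your proposal is correct and follows essentially the same route as the paper's own proof: it assembles Theorem~\ref{thm:intersection_thm}, Proposition~\ref{prop:I_S=Ar_D(S)}, Theorem~\ref{thm:radical_I_S_=_I_L}, Proposition~\ref{prop:I_S_radical}, Corollary~\ref{cor:irred_4_or_less_lines} and Lemma~\ref{lem:minimal_primes} in the same order and for the same purposes. Your extra remark about incomparable minimal subsets is a reasonable point of care (the stated Lemma~\ref{lem:minimal_primes} formally covers all $S'$, not just nested ones), but it does not change the argument.
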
 
\begin{proof}
By Theorem~\ref{thm:intersection_thm} we know that $I_\Delta$ is the intersection of ideals of line arrangements. Since all line arrangements contain at most two lines, their ideals are prime by Corollary~\ref{cor:irred_4_or_less_lines}. We may refine the intersection of ideals of line arrangements to include only $I_{L(S)}$ for each $S \subseteq [k\ell]$ by Proposition~\ref{prop:I_S=Ar_D(S)}. By Theorem~\ref{thm:radical_I_S_=_I_L}, we have that $\sqrt{I_S} = I_{L(S)}$ for each subset $S \subseteq [k\ell]$. By Proposition~\ref{prop:I_S_radical}, we have $I_S$ is radical hence $I_S = I_{L(S)}$. The minimal prime components are determined by Lemma~\ref{lem:minimal_primes} that $I_S$ is a minimal if and only if $S$ is a minimal as in Definition~\ref{def:S_minimal}. 
\end{proof}

\section{Application to conditional independence}\label{sec:application_CI_statements}

In the field of algebraic statistics, conditional independence (CI) \cite{Studeny05:Probabilistic_CI_structures} has proved to be a valuable tool. The general setup for such questions is as follows. Given a list of random variables and collection of CI statements, determine which distributions on the random variables simultaneously satisfy all CI conditions and detect any additional CI statements implied by the given CI statements. The data of the CI statements can be encoded in a graphical model \cite{MDLW18:Handbook_of_graphical_models}, where inferences about the distributions can be drawn from the combinatorial properties of the graph. In this paper we are particularly interested when some of the variables are hidden, which means that their marginal distributions are not observed. Our aim is to determine when certain constraints on the observed variables arise from conditions on the hidden variables \cite{Steudel-Ay}. We proceed algebraically by noting that distributions satisfying CI statements are precisely the distributions satisfying certain polynomial equations \cite{DrtonSturmfelsSullivant09:Algebraic_Statistics, Sullivant}. The collection of all such polynomials is called a \textit{CI ideal} and has been widely studied \cite{Fink,herzog2010binomial,Rauh,SwansonTaylor11:Minimial_Primes_of_CI_Ideals}. The distributions satisfying the given collection of CI statements can be recovered by intersecting the CI ideal with the probability simplex. In the presence of hidden variables, these polynomial equations become far more complicated and very few cases have been computed and studied \cite{andreas,clarke2020conditional}. More precisely, the classical CI ideals are generated by $2$-minors of a matrix probabilities whereas the CI ideals for models with hidden variables may contain larger minors and thus motivating the definition of determinantal hypergraph ideals.

\subsection{CI statements and ideals}
Let $X, Y, Z$ be discrete random variables with finite range. We say $X$ and $Y$ are \emph{independent}, and write $\ind X Y$, if $P(X = x, Y = y) = P(X = x)P(Y = y)$ for all $x, y$. In other words the joint distribution of $X$ and $Y$ can be factored as the product of the marginal distributions of $X$ and $Y$. We may view this in terms of the joint probability matrix $P_{XY} = (p_{x, y})$, where $p_{x, y} = P(X = x, Y = y)$. The marginal distributions for $X$ and $Y$ are the column vectors $P_X = (p_x)$ and $P_Y = (p_y)$ where $p_x = \sum_y p_{x, y}$ and $p_y = \sum_x p_{x,y}$. The variables $X$ and $Y$ are independent if and only if $P_{XY} = P_X \cdot P_Y^T$, or, as we have seen before, $P_{XY}$ is of rank one. Therefore two random variables are independent if and only if all $2$-minors of their joint probability matrix are zero. If we consider $p_{x,y}$ as variables of a ring $\CC[p_{x,y}]$, then the ideal generated by all $2$-minors is called the \emph{conditional independence ideal $I_{\ind X Y}$}. The probability distributions that appear in the variety are all possible distributions that satisfy $\ind X Y$.

We say that $X$ and $Y$ are conditionally independent given $Z$, and write $\ind X Y | Z$, if 
\begin{equation}\label{eqn:cond_indep}
P(X = x, Y = y | Z = z) = P(X = x | Z = z) P(Y = y | Z = z)\quad\text{for all $x, y, z$}.
\end{equation}
We may state this condition in terms of the joint probability tensor $P_{XYZ} = (p_{x,y,z})$ where $p_{x,y,z} = P(X = x, Y = y, Z = z)$. We imagine $P_{XYZ}$ as a $3$-dimensional tensor. Condition~\eqref{eqn:cond_indep} is equivalent to each $z$-slice $(p_{x,y,z})_{x,y}$ having rank one. The conditional independence ideal $I_{\ind X Y | Z} \subseteq \CC[p_{x,y,z}]$ is generated by the $2$-minors of each $z$-slice. Explicitly each generator of the ideal is of the form $$p_{x_1, y_2, z}p_{x_2, y_2, z} - p_{x_1, y_2, z} p_{x_2, y_1, z}.$$

We say that a variable $Z$ is \emph{hidden} if the marginal distribution is not observed. So if $X, Y$ are observed variables, i.e.~not hidden, then the probabilities we observe are $P_{XY}$. Suppose that the state space of $Z$ has size $k$. Since each $z$-slice of the tensor $P_{XYZ}$ has rank one, it follows that the flattening $P_{XY} = (p_{x,y}) = \left( \sum_z p_{x,y,z} \right)_{x,y}$ has rank at most $k$, hence the $k + 1$ minors of $P_{X,Y}$ vanish. So we define the conditional independence ideal with hidden variables to be $I_{\ind X Y | C} \subseteq \CC[p_{x,y}]$ to be the ideal generated by all $k + 1$ minors of the matrix $(p_{x,y})$ of observed variables. Given a collection $\mC$ of CI statements with hidden variables, we define the conditional independence ideal $I_\mC = \sum I_{\ind X Y \mid Z}$ to be the sum of the conditional independence ideals for each $\ind X Y \mid Z \in \mC$.

\begin{example}\label{example:CI-4-4-2}
{\rm 
Suppose $X, Y$ are observed random variables with state space $S = \{1,2,3,4\}$ and suppose $H$ is a binary hidden random variable which takes values in $\{0, 1\}$. Suppose these variables satisfy the CI statement $\ind X Y \mid H$. The probabilities of the joint distribution of $X,Y$ and $H$ is represented by the $3$-dimensional tensor $P_{XYH} = (p_{x,y,h})$ where $x,y \in S$ and $h \in \{0,1\}$. The CI statement implies that each $H$-slice of $P_{XYH}$ has rank one. Explicitly, these slices are $(p_{x,y,0})$ and $(p_{x,y,1})$.
The observed probabilities are obtained by flattening $P_{XY} = (p_{x,y}) = (p_{x,y,0} + p_{x,y,1})$. It follows that $P_{XY}$ has rank at most two and so all the $3$-minors of $P_{XY}$ are zero. The associated CI ideal lies in the polynomial ring $\CC[p_{x,y} : x,y \in S]$ and is generated by all $3$-minors of $P_{XY}$.
}
\begin{figure}
    \centering
    \includegraphics[scale=0.7]{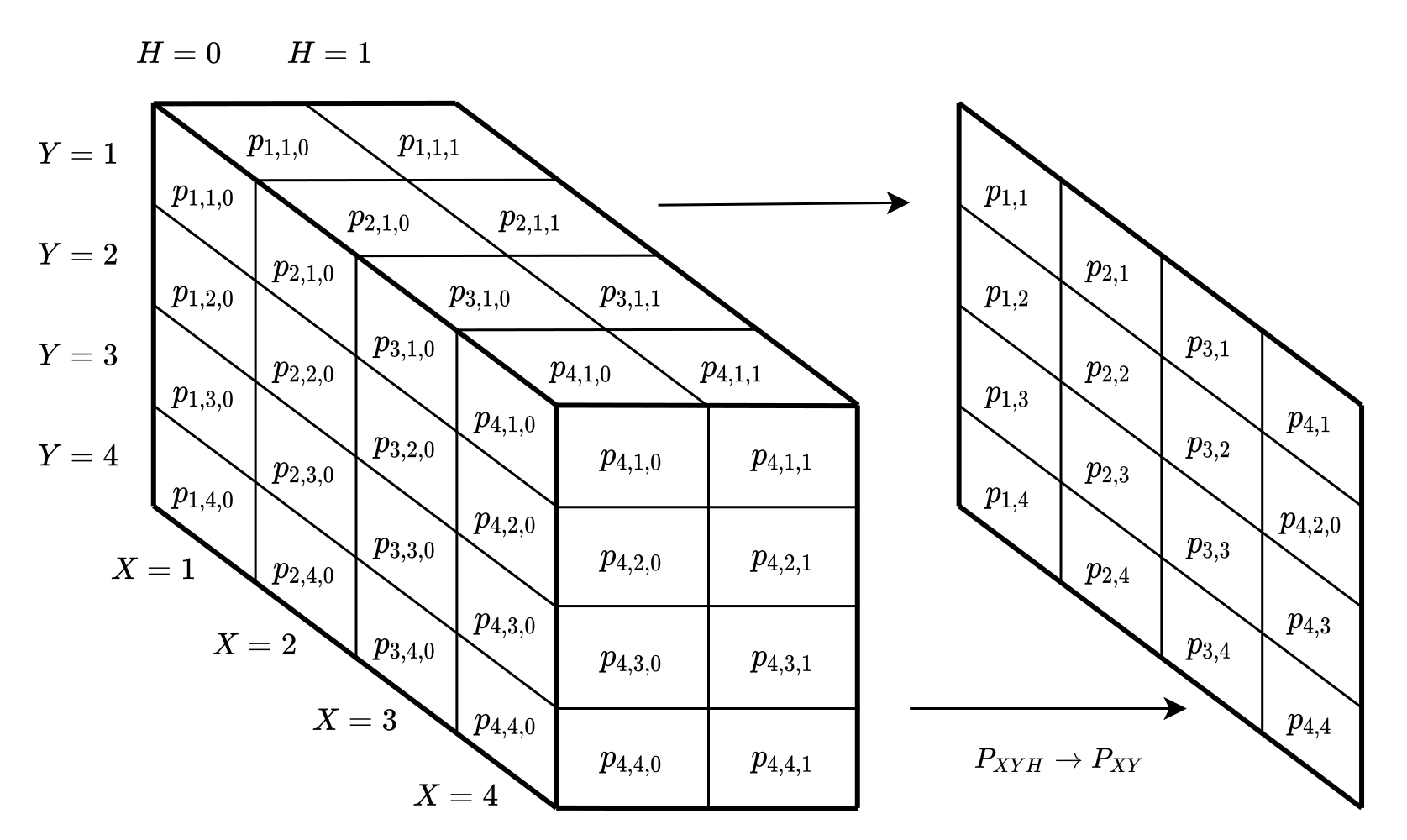}
    \caption{Depiction of the flattening of the tensor $P_{XYH}$ of probabilities from Example~\ref{example:CI-4-4-2}. Each $H$-slice of $P_{XYH}$ has rank one and so the rank of $P_{XY}$ is at most two.}
    \label{fig:flatten_tensor}
\end{figure}
\end{example}

\subsection{Intersection axiom}\label{sec:CI_intersection}

Suppose $X, Y_1, Y_2$ are discrete random variables with finite ranges $\MX, \MY_1, \MY_2$ of size $d, k, \ell$ respectively. Let $H$ be a hidden random variable and consider the following CI model given by:
\begin{eqnarray}\label{eq:C}
\mC : 
\ind X {Y_1} \mid Y_2 \quad \textrm{and} \quad  \ind X {Y_2} \mid \{Y_1, H\}.
\end{eqnarray}
If $H$ is a constant then the \textit{intersection axiom} states that $\ind X {\{Y_1, Y_2\}}$ holds whenever the distributions have strictly positive probabilities. From the above CI statements we can pass to the CI ideal. The ideal $I_\mC$ is the intersection of prime binomial (toric) ideals \cite{Fink} and one of these components is precisely the CI ideal $I_{\ind{X}{\{Y_1, Y_2\}}}$. In terms of varieties, this is the component which contains all distributions that are \textit{fully supported}, i.e.~do not have any structural zeros. All other components contain structural zeros which are well-understood \cite{herzog2010binomial, Rauh}.

\begin{remark}\label{rem:CI}
We consider the ideal $I_\mC$ of the CI model in \eqref{eq:C} when $H$ takes two different values. 
We write $Y$ for the joint distribution of $Y_1$ and $Y_2$. We identify the state space of $Y$ with $[k\ell]$ and arrange this set in a $k \times \ell$ matrix $(\MY_{i,j})$.
The hypergraph ideal $I_\Delta \subseteq R$ studied throughout this paper coincides with the conditional independence ideal of $\MC$. The state space of the joint distribution of observed variables is in bijection with the variables of the ring $R$. For notation purposes, let us write $P = (p_{i,j})$ for the matrix of variables in $R$, where $i \in [d]$ and $j \in [k\ell]$.

The construction of $I_\Delta$ can be seen explicitly as follows. The ideal $I_{\ind X {Y_1} \mid Y_2}$ is generated by all $2$-minors of $Y_2$-slices of the matrix $P$. Each $Y_2$-slice corresponds to a column of the matrix $\MY$. So, for each column $C_i$ of $\MY$, the ideal $I_{\ind X {Y_1} \mid Y_2}$ contains all $2$-minors of submatrices of $P$ with columns indexed by $C_i$.
The ideal $I_{\ind X {Y_2} \mid \{Y_1, H \}}$ is generated by all $3$-minors of $Y_1$-slices of $P$. Each $Y_1$-slice corresponds to a row of the matrix $\MY$. So for each row $R_j$ of $\MY$, the ideal $I_{\ind X \{Y_2\} \mid \{Y_1, H\}}$ contains all $3$-minors of submatrices of $P$ with columns indexed by $R_j$. 
\end{remark}

\subsection{Interpretations and results}

Understanding the irreducible decomposition of the variety of the CI ideal allows us to explore the distributions that satisfy the given CI statements.

\begin{remark}
Theorem~\ref{thm:intersection_thm} allows us to write $V(I_\Delta)$ as a union of configuration spaces of line arrangements. The arrangements $Ar_{\Delta}(\emptyset)$ have particular statistical significance since the distributions which lie inside do not contain any structural zeros \cite{Sturmfels02:Solving_polynomial_equations}.
For $k = 2$, we have shown that $I_0$ is the unique prime component of $\sqrt{I_{\Delta}}$ which does not contain any monomials. It follows that $I_0 = (\sqrt{I_\Delta} : \prod p_{x,y} ^\infty)$ is given by saturating the ideal with respect to the product of variables in $R$. The ideal $I_0$ is the CI ideal associated to $\ind X {\{Y_1, Y_2\}} \mid H$. We may therefore deduce a hidden variable version of the intersection axiom as follows:
\[
\MC = \{\ind X {Y_1} \mid Y_2, \ \ind X {Y_2} \mid \{Y_1, H\} \}
\implies
\ind{X}{\{Y_1, Y_2\} \mid H}.
\]
All other prime components are of the form $I_S$ for some subset $S$. The monomials in $I_S$ correspond exactly to $S$, i.e.~the zero columns of a generic point $A \in V(I_S)$ are those indexed by $S$.
\end{remark}

\begin{remark}
There are many computational resources which aid in the calculation of binomial ideals that arise from CI statements without hidden random variables. In general these ideals and their prime components are all binomial. 
However, once hidden variables are introduced, computations quickly become intractable. Even in the presence of a binary hidden random variable $H$, the current built-in algorithms in computer algebra systems such as {\tt Singular} and {\tt Macaulay} cannot handle the case $k=3, d=3, \ell=4$. 
The decomposition of the associated ideal leads to 319 components: one of them corresponds to the CI statement $\ind{X}{\{Y_1,Y_2\}}\mid H$ which implies the ``intersection axiom'' and the rest of the components represent probability distributions which lie on the boundary of the probability simplex, i.e.~they have structural zeros. These components correspond to nine types of line arrangements which have at most three lines, see Example~\ref{example:k3l4s2td3}.

If we restrict our attention to $k = 2, d = 3$ then $\ell = 5$ is the smallest $\ell$ for which the prime decomposition of $I_\Delta$ cannot be determined directly by current programs. In this case we have seen that there are 171 prime components which fall into six isomorphism classes, see Example~\ref{example:k2l5s2td3}. Each prime component corresponds to a distinct line arrangement with at most two lines.
\end{remark}

\section{Further questions}\label{sec:further_qus}
We now consider how the results in this paper may generalise and provide calculations to support these claims. We begin by summarising the largest example of a minimal prime decomposition of $\ID$ that we have been able to verify by computer. In this example we have $k = 3$ and so we conjecture that the prime components correspond to point and line arrangements with at most three lines. We then explore the extent to which the Gr\"obner basis results 
may generalise. 

\begin{example}
\label{example:k3l4s2td3}
{\rm Let $k = 3, \ell = 4$, so we have 
$\mY = 
\begin{bmatrix}
1   &4  &7  &10 \\
2   &5  &8  &11 \\
3   &6  &9  &12 \\
\end{bmatrix}
$ and
\vspace{-0.2cm}
\begin{equation*}
\resizebox{\textwidth}{!}{
$\Delta = \left\{\binom{\{1,2,3\}}{2},\binom{\{4,5,6\}}{2},
\binom{\{7,8,9\}}{2}, \binom{\{10,11,12\}}{2},
\binom{\{1,4,7,10\}}{3}, \binom{\{2,5,8,11\}}{3},
\binom{\{3,6,9,12\}}{3} \right\}.$}
\end{equation*}
If $d=3$ then we are able to verify the prime decomposition of $I_{ \Delta}$. We list the prime components below, up to isomorphism. Note that each component is a hypergraph ideal whose hypergraph can be uniquely identified by its singletons. If $S$ is the set of singletons of the hypergraph, we write $I_S$ for its associated ideal.
}
{\rm We note that the ideals in the last $5$ rows of the table below are generated by variables and $2$-minors. These ideals are closely related to generalised binomial edge ideals, see \cite{Rauh}. Since the connected components of the hypergraphs are cliques, it follows that these ideals are all radical and prime. Geometrically the components in the last $5$ rows correspond to arrangements of points. For instance, the ideal $I_{1,4,5,8,9,12}$ corresponds to the arrangement of points $\{2,3,6,7,10,11 \}$ where the pairs of points $2,3$ and $10,11$ coincide.  

The other components, appearing at the top of the table, are ideals of line arrangements. The ideal $I_0$ corresponds to a line arrangement with a single line and all points lying on this line. The other ideals also correspond to line arrangements that have exactly one line, however in each case there is a point lying off the line. For larger values of $\ell$, we expect that prime components of $I_\Delta $ correspond to line arrangements with at most three lines and are no longer determinantal. For instance, we would expect to see prime components with three lines meeting at a point, see Example~\ref{example:three_lines}.
A graphical representation of these components is given in Figure~\ref{fig:example1components}
}
\begin{center}
\resizebox{0.85\textwidth}{!}{
\begin{tabular}{ccc}
    \toprule
   {\rm  Type of Ideal}  &
    {\rm Hypergraph of ideal} & 
    \begin{tabular}{c} {\rm Number of ideals} \\ {\rm with the same type}
    \end{tabular}  \\
    \midrule
    $I_0$               &
    $\left\{\binom{C_i}{2}  \textrm{ for all } i \in [4], \ \binom{[12]}{3} \right\} $ & 
    $1$    \\
    $I_{1,2,6,9}$       &
    $\left\{1,2,6,9,\{4,5 \}, \{7,8 \}, \binom{\{10,11,12 \}}{2}, \binom{\{4,5,7,8,10,11,12\}}{3} \right\}$
    & 
    $36$                    \\
    $I_{1,5,6,8,9}$     & 
    $\left\{1,5,6,8,9,
    \{2,3 \}, \binom{\{ 10, 11, 12\}}{2},  \binom{\{4, 7, 10, 11, 12\}}{3}\right\} $
    &
    $36$                    \\
    $I_{1,5,6,8,12}$    & 
    $ \left\{
    1,5,6,8,12,
    \{2,3\},
    \{7,9 \},
    \{10,11 \},
    \binom{\{4,7,9,10,11\}}{3}
    \right\}$
    & 
    $72$                    \\
    \midrule
    $I_{1,4,8,12}$    &
    $\left\{1, 4, 8, 12, \binom{\{2,3,5,6\}}{2}, \{7,9\} , \{10,11 \} \right\}$ &
    $24$                    \\
    $I_{1,2,6}$       & 
    $\left\{1, 2, 6, \{4,5\}, \binom{\{7,8,9,10,11,12\}}{2} \right\}$ &
    $36$                    \\
    $I_{1,2,7,9,11,12}$ & 
    $\left\{1,2,7,9,11,12, \binom{\{4,5,6\}}{2} \right\}$ &
    $24$                  \\
    $I_{1,2,4,5,9,12}$  & 
    $\left\{1,2,4,5,9,12, \{7,8 \},\{10,11\} \right\}$ &
    $18$                 \\
    $I_{1,4,5,8,9,12}$  & 
    $\left\{1,4,5,8,9,12, \{2,3 \},\{10,11\} \right\}$ &
    $72$                  \\
    \bottomrule
\end{tabular}
}
\end{center}
\begin{figure}
    \centering
    \resizebox{0.9\textwidth}{!}{
    \includegraphics{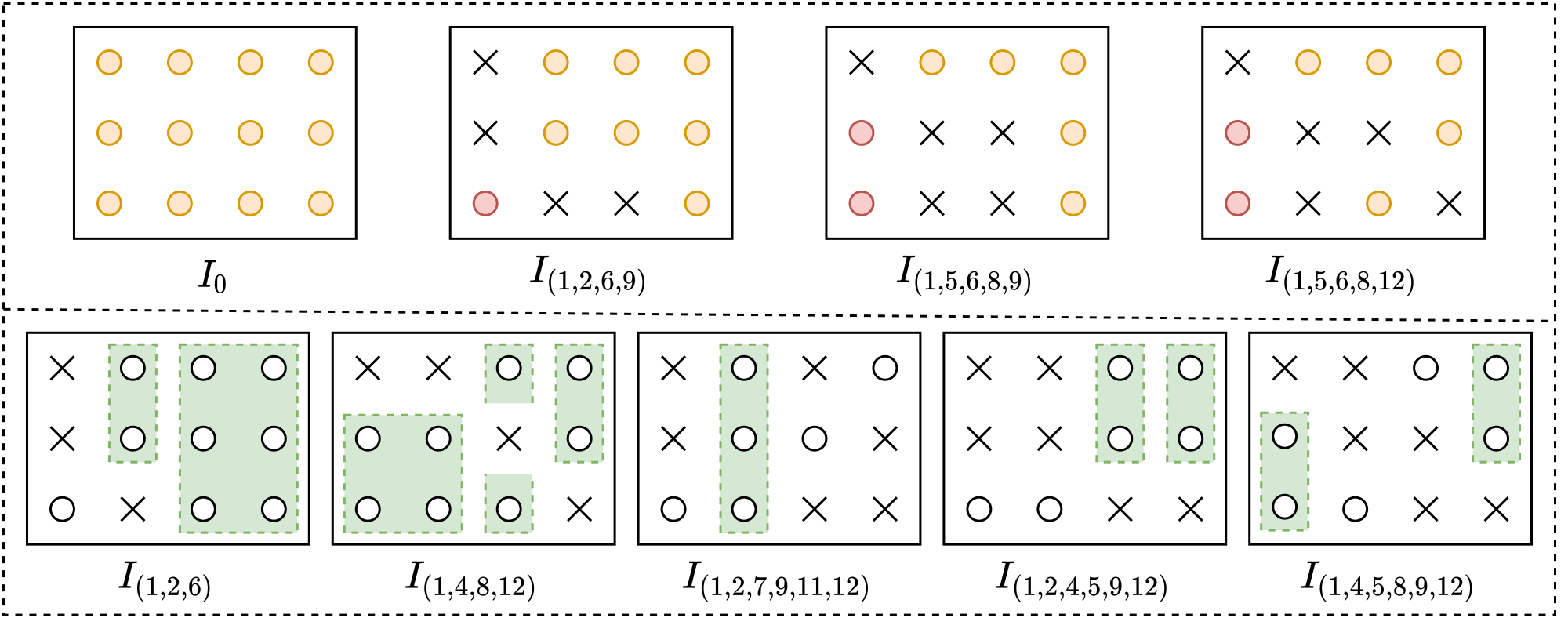}}
    \caption{A diagram depicting the hypergraphs for each prime component of $I_{ \Delta}$ in Example~\ref{example:k3l4s2td3}. The crosses indicate the singleton edges. The ideals in the lower part of the diagram are those generated by $2$-minors and variables. The shaded regions depict the cliques of the hypergraph. In the top row of the diagram, the shaded circles indicate connected components of $\Delta_S$.}
    \label{fig:example1components}
\end{figure}
\end{example}

In Example~\ref{example:k3l4s2td3}, each hypergraph ideal corresponds to a line arrangement with at most one line.
Hence we can apply our results to study these components. And so it is natural to ask:
\begin{question}
Are the minimal prime components of $\ID$ in general the ideals of line arrangements?
\end{question}

\medskip

We have seen that the non-determinantal generators of $I_L$ for small line arrangements $L$ coincide with the condition in Example~\ref{example:three_lines}. 
Let us define the following collection of polynomials.

\begin{definition}
\label{def:gen_set_for_IL}
Let $L = (\mP, \mL, \mI)$ be a line arrangement on $[n]$ and fix $d \ge 3$. Recall that $X$ is a $d \times n$ matrix of variables.  We let $G_L$ be the following collection of polynomials in $\CC[X]$,
\begin{itemize}
    \item $[A | B]$, where $A \subseteq [d] $, $B \subseteq P$, $P \in \mP$ and $|A| = |B| = 2$.
    \item $[A | B]$, where $A \subseteq [d] $, $B \subseteq \bigcup_{P \in L_i}P$, $L_i \in \mL$ and $|A| = |B| = 3$.
    \item $[cda][efb] - [cdb] [efa]$,
    taken over all points such that the pairs $(a,b), (c,d), (e,f)$ lie on three distinct lines that are incident to a common point.
\end{itemize}
\end{definition}

\begin{question}
For which line arrangements $L$ do we have $I_L = \langle G_L \rangle$? And for which line arrangements does $G_L$ form a Gr\"obner basis?
\end{question}

We note that for all cases of $G_L$ that we have been able to successfully compute, there always appears to be an ordering of the columns of variable matrix $X$ such that the generating set $G_L$ is a Gr\"obner basis with respect to the standard lex order.

\begin{conjecture}\label{conj:_I_L_gen_G_L}
If $L$ is a line arrangement with at most $4$ lines then $G_L$ generates $I_L$.
\end{conjecture}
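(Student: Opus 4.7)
The plan is to combine the irreducibility of $V_L$ from Corollary~\ref{cor:irred_4_or_less_lines} with a finite case analysis over the line arrangement types listed in Figures~\ref{fig:3xn_line_arrangements} and \ref{fig:4xn_line_arrangements}. The inclusion $\langle G_L \rangle \subseteq I_L$ is immediate, since each generator of $G_L$ vanishes on every configuration in $Z_L^o$: coinciding points produce a rank-1 submatrix (so all 2-minors vanish), collinear triples give a rank-2 submatrix (so all 3-minors vanish), and the sextic $[cda][efb]-[cdb][efa]$ expresses the classical projective incidence that three lines meet at a point, as in Example~\ref{example:three_lines}.

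For the reverse inclusion, since $I_L$ is prime and $\langle G_L\rangle \subseteq I_L$, it suffices to establish $V(\langle G_L\rangle) \subseteq V_L$ together with primality (equivalently, radicality) of $\langle G_L\rangle$. For the set-theoretic containment I would mirror the inductive buildup of Theorem~\ref{thm:irred_line_arr_build_up} and Figure~\ref{fig:build_up_4_line_configs}: given $A \in V(\langle G_L\rangle)$, the restriction $A'$ of $A$ to the columns not solely on a chosen line $\ell$ lies in $V(\langle G_{L \setminus \ell}\rangle)$, which by induction is contained in $V_{L \setminus \ell}$. The generators of $G_L$ that are new relative to $G_{L \setminus \ell}$ are the 3-minors on triples along $\ell$ together with any sextic concurrency relation introduced by $\ell$; these force the remaining columns of $A$ to lie in the 2-dimensional linear subspace spanned by the prescribed intersection data. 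A perturbation argument in the spirit of Lemma~\ref{lem:perturb_in_p_line}, combined with Lemma~\ref{prop:topology}, upgrades Euclidean approximation to Zariski closure and yields $A \in \overline{Z_L^o} = V_L$.

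The main obstacle is arrangement 16 of Figure~\ref{fig:4xn_line_arrangements}, where four lines pairwise intersect at six points, together with the degenerate reductions of arrangements 10, 14, and 15 in which multiple triples of concurrent lines produce several sextic relations simultaneously. Here $V(\langle G_L\rangle)$ could a priori contain stray top-dimensional components inside the basis-ideal locus $\bigcup V(I_B)$, and one must verify that these are absorbed when forming $Z_L^o$ and do not produce extra minimal primes. I would handle these by following the parametrization proof of Corollary~\ref{cor:irred_4_or_less_lines}, explicitly coordinatizing the intersection points and the free points on each line, and showing that the sextic relations combined with the 3-minors cut out exactly the image of the parametrizing map $\psi$. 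Primality of $\langle G_L\rangle$ would then follow from a dimension count against the irreducible $V_L$, and radicality from a Gr\"obner basis calculation in a lex order adapted to the incidence structure of $L$ (as suggested by the remark preceding the conjecture), with the 2- and 3-minors reducing by the classical Sturmfels arguments used already in Lemma~\ref{lem:k=2_grobner} and the sextic polynomials contributing only square-free leading terms.
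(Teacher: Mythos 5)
The statement you are trying to prove is a \emph{conjecture} in the paper: the authors offer no proof of it, and report only that they have verified it computationally for a single family (three concurrent lines, one of which carries exactly three points). So there is no ``paper proof'' to compare against, and your text should be judged as a standalone attempt.

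As such, it is a reasonable strategy outline but not a proof; the two steps you defer are exactly where the difficulty lives. First, the containment $V(\langle G_L\rangle)\subseteq V_L$ is the genuine content of the conjecture. A point $A\in V(\langle G_L\rangle)$ may be highly degenerate (zero columns, unwanted coincidences, unwanted collinearities), and your inductive step --- restricting to $L\setminus\ell$ and invoking the new generators to ``force'' the remaining columns into a plane --- only describes what happens on the open stratum where $A$ already looks like a configuration. The whole problem is to show that the degenerate points of $V(\langle G_L\rangle)$ do not form extra irreducible components outside $\overline{Z^o_L}$. The paper's own remark at the end of Section~2 warns that for combinatorial closures of matroid ideals (the natural generalisation of $\langle F(L)\rangle$) the associated varieties fail to be irreducible even on small ground sets, so this cannot be waved through; Lemma~\ref{lem:perturb_in_p_line} handles only a single line, and no perturbation argument for arrangements with several intersecting lines is supplied. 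Second, you ground radicality of $\langle G_L\rangle$ in a Gr\"obner basis computation ``in a lex order adapted to the incidence structure,'' but whether $G_L$ is a Gr\"obner basis for any order is itself posed as an open question immediately before the conjecture; asserting that the sextics contribute square-free leading terms after reduction against the minors is not something you can take for granted, and the $S$-polynomial analysis in Lemma~\ref{lem:k=2_grobner} covers only the two-line ($k=2$) ideals. Until both of these points are established --- for each of the sixteen four-line incidence types, including the concurrency-heavy cases $10$, $14$, $15$, $16$ you flag --- the argument remains a plan rather than a proof.
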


We have seen that when $k=2$, for any line arrangement $L$, the generators of $G_L$ form a Gr\"obner basis for $I_L$ and the space of configurations is given by configurations of at most two lines. 
We have verified Conjecture~\ref{conj:_I_L_gen_G_L} for the line arrangements consisting of three lines meeting at a point where one line contains exactly three points. We are able to use Gr\"obner bases to show that $\langle G_L \rangle$ is radical. The non-determinantal generators that appear in $G_L$ can be viewed as the remainder of a certain $S$-polynomial after applying the division algorithm on the determinants in $G_L$.

\medskip

\noindent{\bf Acknowledgement.} 
F. Mohammadi and H.J. Motwani were partially supported by BOF grant 
STA/201909/038, EPSRC Early-Career Fellowship EP/R023379/1, and 
FWO grants (G023721N, G0F5921N).
O. Clarke is supported by EPSRC Doctoral Training Partnership 
award EP/N509619/1.

\smallskip
\bibliographystyle{unsrt} 
\bibliography{Det.bib}

\bigskip
\noindent
\footnotesize {\bf Authors' addresses:}

\bigskip 

\noindent Department of Mathematics, University of Bristol, Bristol, UK \\
E-mail address: {\tt oliver.clarke@bristol.ac.uk}

\medskip

\noindent Department of Mathematics: Algebra and Geometry, Ghent University, 9000 Gent, Belgium \\
Department of Mathematics and Statistics,
UiT – The Arctic University of Norway, 9037 Troms\o, Norway
\\ E-mail address: {\tt fatemeh.mohammadi@ugent.be}

\medskip

\noindent Department of Mathematics: Algebra and Geometry, Ghent University, 9000 Gent, Belgium 
\\ E-mail address: {\tt harshitjitendra.motwani@ugent.be}

\end{document}